\documentclass[a4paper,11pt]{article}
\usepackage[utf8]{inputenc}
\usepackage{amsmath,amsthm}
\usepackage{booktabs}
\usepackage{tikz}
\usepackage{natbib}

\usepackage{dsfont}
\usepackage{amssymb}

\theoremstyle{plain}
\newtheorem{lemma}{Lemma}
\newtheorem{proposition}{Proposition}
\newtheorem{theorem}{Theorem}

\newtheorem{condition}{Condition}

\theoremstyle{definition}
\newtheorem{definition}{Definition}
\newtheorem{example}{Example}

\theoremstyle{remark}
\newtheorem{remark}{Remark}

\DeclareMathOperator{\sgn}{sgn}
\DeclareMathOperator{\divergence}{div}

\DeclareMathOperator{\lspan}{span}
\DeclareMathOperator{\rg}{rg}
\DeclareMathOperator{\TV}{TV}
\DeclareMathOperator{\BV}{BV}
\DeclareMathOperator{\argmin}{argmin}

\newcommand{\conditionalcomma}[1]{\ifx#1\empty\else,\fi}

\newcommand{\RR}{\mathds{R}}
\newcommand{\NN}{\mathds{N}}

\newcommand{\bdry}{\partial}

\newcommand{\grad}{\nabla}
\newcommand{\sign}[1]{\sgn({#1})}
\newcommand{\abs}[1]{{|{#1}|}}
\newcommand{\bigabs}[1]{\bigl|{#1}\bigr|}

\newcommand{\inprod}{\cdot}

\newcommand{\scp}[3][]{\langle{#2},\, {#3}\rangle_{#1}}

\newcommand{\seminorm}[2][]{|{#2}|_{#1}}
\newcommand{\norm}[2][]{\|{#2}\|_{#1}}

\newcommand{\lebesgueL}[1]{L^{#1}}
\newcommand{\lpspace}[1]{\ell^{#1}}
\newcommand{\LPspace}[2]{\lebesgueL{#1}({#2})}

\newcommand{\Hspace}[3][]{H^{#2\conditionalcomma{#1}{#1}}(#3)}

\newcommand{\linspan}[1]{\lspan\{{#1}\}}

\newcommand{\set}[2]{\{{#1} \ : \ {#2}\}}

\newcommand{\sett}[1]{\{{#1}\}}

\newcommand{\without}{\backslash}
\newcommand{\dd}[1]{\ \mathrm{d}{#1}}
\newcommand{\conv}{\ast}
\newcommand{\wrightarrow}{\rightharpoonup}

\newcommand{\seq}[1]{\{{#1}\}}

\newcommand{\subgrad}{\partial}
\newcommand{\placeholder}{\,\cdot\,}
\newcommand{\kernel}[1]{\ker({#1})}
\newcommand{\range}[1]{\rg({#1})}

\newcommand{\directsum}{\oplus}

\newcommand{\kronO}[1]{\mathcal{O}(#1)}

          % Bibliography's font size;
\setlength{\bibhang}{4ex}         % Indent of Bibliography entries;
\setlength{\bibsep}{3pt}         % Separation between BiB entries;
\setlength{\bibsep}{3pt}         % Separation between BiB entries;

\title{A forward-backward splitting algorithm for
  the minimization of non-smooth convex functionals in 
  Banach space}
\author{Kristian Bredies\thanks{\texttt{kbredies@math.uni-bremen.de},
    Center for Industrial Mathematics
    / Fachbereich 3, University
    of Bremen, Postfach 33 04 40, D-28334 Bremen}}
\date{\today}

\begin{document}
\maketitle

\begin{abstract}
  We consider the task of computing an approximate minimizer 
  of the sum of a smooth and non-smooth
  convex functional, respectively, in Banach space. 
  Motivated by the classical forward-backward 
  splitting method for the subgradients 
  in Hilbert space, we propose a generalization 
  which involves the iterative solution of simpler subproblems.
  Descent and convergence properties of this new algorithm are
  studied. Furthermore, the results are applied to the minimization
  of Tikhonov-functionals associated with linear inverse problems
  and semi-norm penalization in Banach spaces. 
  With the help of Bregman-Taylor-distance
  estimates, rates of convergence for the forward-back\-ward splitting
  procedure are obtained. Examples which demonstrate the 
  applicability are given, in particular, a generalization of
  the iterative soft-thresholding method by Daubechies, Defrise and
  De Mol to Banach spaces as well as total-variation based image
  restoration in higher dimensions are presented.
\end{abstract}

\textbf{Keywords:} forward-backward splitting, convergence analysis,
Banach spaces, inverse problems, iterative thresholding
\medskip

\textbf{AMS Subject Classification:}
49M27, %Decomposition methods
46B20, %Geometry and structure of normed linear spaces
65J22. %Inverse problems

\section{Introduction}

The purpose of this paper is, on the one hand, to introduce
an iterative forward-backward splitting procedure 
for the minimization of functionals of type
\begin{equation}
  \label{eq:min_prob}
  \min_{u \in X} \ F(u) + \Phi(u)  
\end{equation}
in Banach spaces and 
to analyze its convergence properties. 
Here, $F$ represents a convex smooth functional 
while the convex $\Phi$ is allowed to be non-smooth.
On the other hand, the application of this algorithm 
to Tikhonov functionals associated with linear inverse problems 
in Banach space is studied. We consider, in particular,
general regularization terms which are only powers of semi-norms
instead of norms.
Moreover, examples which show the range of applicability of the algorithm
as a computational method are presented.

The forward-backward splitting algorithm for minimization 
in Banach space suggested in this work
tries to establish a bridge between the well-known forward-backward
splitting in Hilbert space 
\cite{lions1979splittingalgorithms,chen1997fwdbackwdconvrates,
  combettes2005signalrecovery} 
and minimization algorithms in Banach 
space, which require in general more analysis.
For example, in the situation of Banach spaces, gradient-based 
(or subdifferential-based) methods always have to deal with the 
problem that the gradient is an element of the dual space and can
therefore not directly used as a descent direction. One common
approach of circumventing this difficulty is performing the
step in gradient direction in the dual space and use appropriate
duality mappings to link this procedure to the primal space 
\cite{alber1997minnonsmoothconvexfunc,schuster2008tikbanach}.
Such a procedure applied to~\eqref{eq:min_prob}
can be seen as a full explicit step for $\subgrad(F+\Phi)$ and 
convergence can often be achieved with the help of a-priori 
step-size assumptions.
In contrast to this, forward-backward splitting algorithms also
involve an implicit step by applying a resolvent mapping. The notion
of resolvents can also be generalized to Banach spaces by introducing
duality mappings \cite{kohsaka2004convseqmaxmonotonebanach},
combining both explicit and implicit steps to a 
forward-backward splitting algorithm, however, has not been considered
so far.

The paper can be outlined as follows.
We present, in Section~\ref{sec:algorithm}, 
a generalization of the forward-backward splitting algorithm 
which operates in Banach spaces
and coincides with the usual method in case of Hilbert spaces.
The central step for the proposed method is the successive solution
of problems of type
\begin{equation*}
  % \label{eq:min_prob_aux}
  \min_{v \in X} \ \frac{\norm[X]{v - u^n}^p}{p} + \tau_n
  \Bigl(
  \scp{F'(u^n)}{v} + \Phi(v) \Bigr)
\end{equation*}
which are in general easier to solve than the original problem.
In particular, we will show in 
Section~\ref{sec:descent_convergence}
that the algorithm stops or the functional values 
converge as $n \rightarrow \infty$ with an asymptotic 
rate of $n^{1-p}$ if $X$ is reflexive 
and $F'$ is locally $(p-1)$-Hölder continuous.
There, we only have to assume that the step-sizes obey some upper and
lower bounds.
Moreover, under certain conditions, strong convergence with 
convergence rates will be proven, in particular that the $q$-convexity
of $\Phi$ implies the rate $n^{(1-p)/q}$.

In Section~\ref{sec:application_tikhonov}, we 
will apply these results to the linear inverse problem
of solving $Ku = f$ with $K: X \rightarrow Y$ which is
regularized with a semi-norm of some potentially smaller space
as penalty term and leading to the
problem of minimizing a non-smooth Tikhonov-functional:
\[
\min_{u \in X} \ \frac{\norm[Y]{Ku-f}^p}{p} + \alpha \seminorm[Z]{u}
\]
The forward-backward splitting algorithm
applied to these type of functional will be discussed. 
It turns out that we have convergence with rate
$n^{(1-p)/q}$ whenever the data space $Y$ is $p$-smooth and 
$q$-convex and the regularizing semi-norm is $q$-convex in a certain
sense.

In Section~\ref{sec:examples_applications}, examples are given
on how to compute the algorithm. Also, basic numerical 
calculations are shown. We consider linear inverse problems with 
sparsity constraints in Banach spaces which leads to a generalization
of the results for the iterative soft-thresholding procedure
in \cite{daubechies2003iteratethresh} to Banach spaces.
Moreover, it is also discussed
how to apply the algorithm to linear inverse problem with 
total-variation regularization in higher dimensions where an embedding
into the Hilbert space $\LPspace{2}{\Omega}$ is impossible.
The article finally concludes with some remarks in 
Section~\ref{sec:conclusions}.

\section{A forward-backward splitting algorithm in Banach space}
\label{sec:algorithm}

Let $X$ be a reflexive Banach space in which the 
functional~\eqref{eq:min_prob} has to be minimized.
Assume that both $F$ and $\Phi$ are proper, 
convex and lower semi-continuous functionals such that 
$F + \Phi$ is coercive. In this setting, we 
require that $F$ represents the ``smooth part'' of the 
functional where $\Phi$ is allowed to be non-smooth.
Specifically, is it assumed that $F$ is differentiable with
derivative which is, on each bounded set,
$(p-1)$ Hölder-continuous for some $1 < p \leq 2$, i.e.
\[
\norm[X^*]{F'(u) - F'(v)} \leq \norm[p-1]{F'} \norm[X]{u - v}^{p-1} 
\]
for $\norm[X]{u}, \norm[X]{v} \leq C$.

We propose the following iterative procedure in order to find
a solution of \eqref{eq:min_prob}.
\begin{enumerate}
\item Start with a $u^0 \in X$ with $\Phi(u^0) < \infty$ and 
  $n = 0$. Estimate, using the coercivity of $F+\Phi$,
  a norm bound $\norm[X]{u} \leq C$ for all $(F+\Phi)(u) 
  \leq (F+\Phi)(u^0)$ and choose $\norm[p-1]{F'}$ 
  accordingly. 
\item Compute the next iterate $u^{n+1}$ as follows. Compute $w^n =
  F'(u^n)$ and determine, for an $\tau_n > 0$ satisfying
  \begin{equation}
    \label{eq:step_size_constraint}
    0 < \underline{\tau} \leq \tau_n
    \qquad , \qquad \tau_n \leq
    \frac{p(1-\delta)}{\norm[p-1]{F'}} \ ,
  \end{equation}
  (with $0 < \delta < 1$) 
  the solutions of the auxiliary minimization problem
  \begin{equation}
    \label{eq:min_prob_aux}
    \min_{v \in X} \ \frac{\norm[X]{v - u^n}^p}{p} + \tau_n
    \Bigl(
    \scp{w^n}{v} + \Phi(v) \Bigr) \ .
  \end{equation}
  Choose, if necessary,
  $u^{n+1}$ as a solution of \eqref{eq:min_prob_aux} which
  minimizes $\norm[X]{v - u^n}$ among all solutions $v$ of
  \eqref{eq:min_prob_aux} to ensure that the fixed points of
  $u^n \mapsto u^{n+1}$ are exactly the minimizers 
  of~\eqref{eq:min_prob}.
\item If $u^{n+1}$ is not optimal, continue with $n := n+1$ and
  repeat with Step 2, after optionally adjusting 
  $\norm[p-1]{F'}$ analogously to Step 1.
\end{enumerate}

\begin{remark}
  This iterative procedure can be seen as a generalization
  of the forward-backward splitting procedure for the subgradients
  (which is in turn
  some kind of generalized gradient projection method 
  \cite{bredies2008itersoftconvlinear}) 
  in case of $X$ and $Y$ being a Hilbert spaces and $p=2$. 
  This reads as, in terms of resolvents 
  (see \cite{ekelandtemam1976convex,brezis1973semigroup,
    showalter1997monotoneoperators}, 
  for example, for introductions to these notions),
  \[
  u^{n+1} = (I + \tau_n \subgrad \Phi)^{-1} (I - \tau_n F')(u^n) 
  \]
  or, equivalently,
  \begin{equation}
    \label{eq:forward_backward_splitting_hilbert}
    u^{n+1} = \argmin_{v \in X} \ 
    \frac{\norm[Y]{v - \bigl(u^n 
        - \tau_n F'(u^n)\bigr)}^2}{2} + \tau_n \Phi(v) \ .
  \end{equation}
  Such an operation does not make sense in Banach spaces, since 
  $F'(u^n) \in X^*$ cannot be subtracted from $u^n \in X$.
  However, \eqref{eq:forward_backward_splitting_hilbert} is equivalent 
  to 
  \[
  u^{n+1} = \argmin_{v \in X} \ 
  \frac{\norm[Y]{v - u^n}^2}{2} + \tau_n \bigl(\scp{F'(u^n)}{v} 
  + \Phi(v) \bigr) \ .
  \]
  which only involves a duality pairing for $F'(u^n)$. Hence, 
  one can replace $\norm[Y]{v - u^n}^2/2$ by $\norm[X]{v - u^n}^p/p$ 
  and $X$, $Y$ with Banach spaces and ends up 
  with~\eqref{eq:min_prob_aux}, which defines a sensible 
  operation (see also Proposition~\ref{prop:aux_prob_well_defined}).
  Denoting $J_p = \subgrad \norm[X]{\placeholder}^p/p$ and
  $P_\tau(u,w) = \bigl(J_p(\placeholder - u) + \tau 
  \subgrad \Phi \bigr)^{-1}(w)$, one can interpret 
  $u^n \mapsto \bigl(u^n, -\tau_nF'(u^n) \bigr)$ as 
  being a generalized forward step while
  $\bigl(u^n, -\tau_n F'(u^n) \bigr) \mapsto P_{\tau_n}\bigl(u^n, 
  -\tau_nF'(u^n)\bigr)$ represents the corresponding generalized
  backward step.
\end{remark}

In the following, we will see that the iteration is indeed
well-defined, i.e.~one can always solve the auxiliary problem
\eqref{eq:min_prob_aux}. Moreover, it is shown that the corresponding
fixed points are exactly the solutions of \eqref{eq:min_prob}.
We start with proving the well-definition.

\begin{proposition}
  \label{prop:aux_prob_well_defined}
  The problem
  \begin{equation}
    \label{eq:min_prob_aux_non_iter}
    \min_{v \in X} \ \frac{\norm[X]{v - u}^p}{p} + \tau \Bigl( \scp{w}{v} +
    \Phi(v) \Bigr)
  \end{equation}
  has a solution for each $u \in X$, $w \in X^*$ and $\tau \geq
  0$. Moreover, there always exists a solution $u^{n+1}$ which
  minimizes $\norm[X]{v - u^n}$ among all solutions $v$.
\end{proposition}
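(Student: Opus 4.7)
The plan is to apply the direct method of the calculus of variations to the auxiliary functional
\[
G(v) = \frac{\norm[X]{v-u}^p}{p} + \tau \bigl( \scp{w}{v} + \Phi(v) \bigr),
\]
on the reflexive Banach space $X$. First I would verify that $G$ is proper (any $v_0$ with $\Phi(v_0) < \infty$ gives $G(v_0) < \infty$), convex (each summand is convex and $\tau \geq 0$), and weakly lower semi-continuous (the norm term is continuous and convex hence weakly lsc, the duality pairing $\scp{w}{v}$ is weakly continuous, and $\Phi$ is weakly lsc since it is convex and strongly lsc on a reflexive space).

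The substantive ingredient is coercivity. Since $\Phi$ is proper, convex and lower semi-continuous, it admits an affine minorant, i.e.\ there exist $\xi \in X^*$ and $c \in \RR$ with $\Phi(v) \geq \scp{\xi}{v} + c$ for all $v \in X$. Inserting this into $G$ yields
\[
G(v) \geq \frac{\norm[X]{v-u}^p}{p} + \tau \scp{w+\xi}{v} + \tau c,
\]
and because $p > 1$ the first summand grows superlinearly in $\norm[X]{v}$, dominating the linear contribution so that $G(v) \to \infty$ as $\norm[X]{v} \to \infty$. Existence of a minimizer then follows by the standard reasoning: any minimizing sequence is bounded by coercivity, admits a weakly convergent subsequence by reflexivity, and the weak limit attains the infimum by weak lower semi-continuity.

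For the second claim, let $M = \argmin_{v \in X} G(v)$. This set is nonempty by the previous step, convex as the argmin of a convex functional, and strongly closed by lower semi-continuity of $G$; being convex and closed it is also weakly closed. Coercivity moreover implies that $M$ is bounded, hence weakly sequentially compact by reflexivity. The function $v \mapsto \norm[X]{v-u}$ is convex and continuous, thus weakly lower semi-continuous; its restriction to the weakly compact set $M$ attains its minimum, yielding the distinguished solution that minimizes $\norm[X]{v - u}$ among all minimizers.

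No step presents a genuine obstacle; the only point worth handling explicitly is that the whole argument depends on $\Phi$ only through properness and on $p$ only through $p > 1$, so it is uniform in $\tau \geq 0$, $w \in X^*$ and $u \in X$, and in particular covers the degenerate case $\tau = 0$.
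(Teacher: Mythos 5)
Your proof is correct, and it follows the same overall strategy as the paper: the direct method on the reflexive space $X$, with coercivity obtained by bounding $\Phi$ from below by a continuous affine functional so that the superlinear growth of $\norm[X]{v-u}^p/p$ dominates, followed by minimizing $\norm[X]{v-u}$ over the nonempty, closed, convex (and, as you note, bounded) solution set. The one genuine difference lies in how the affine minorant is produced. The paper takes a minimizer $u^*$ of the original problem~\eqref{eq:min_prob} and uses $-F'(u^*) \in \subgrad\Phi(u^*)$ to write $\Phi(v) \geq \Phi(u^*) + \scp{-F'(u^*)}{v-u^*}$; this ties the well-posedness of the auxiliary problem to the standing assumptions on $F+\Phi$ (coercivity, hence existence of $u^*$). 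You instead invoke the general Fenchel--Moreau/Hahn--Banach fact that every proper, convex, lower semi-continuous $\Phi$ admits a continuous affine minorant. Your route is slightly more self-contained and more general: it needs nothing about $F$ or about the solvability of the original problem, and it handles $\tau = 0$ without a separate case. The paper's version, on the other hand, costs nothing extra in context since the existence of $u^*$ is available anyway, and it keeps the argument elementary in the sense of not appealing to conjugate duality. Both are valid; no gap.
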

\begin{proof}
  The case $\tau = 0$ is trivial, so let $\tau > 0$ in the following.
  We first show the coercivity of the objective functional.
  For this purpose, note that $N_p(v) = \norm[X]{v - u}^p/p$ grows
  faster than $\norm[X]{v}$,
  i.e.~$N_p(v)/\norm[X]{v} \rightarrow \infty$ whenever $\norm[X]{v}
  \rightarrow \infty$. 
  Moreover, for an $u^*$ minimizing~\eqref{eq:min_prob} we have $\Phi(v)
  \geq \Phi(u^*) + \scp{w^*}{v - u^*}$ where $-w^* = -F'(u^*) \in \subgrad
  \Phi(u^*)$ since $F$ is continuous in $X$ implying that $\subgrad(F +
  \Phi) = F' + \subgrad \Phi$, see \cite{showalter1997monotoneoperators}. 
  Hence, we can estimate
  \begin{multline*}
    \frac{\norm[X]{v - u}^p}{p} + \tau \Bigl( \scp{w}{v} +
    \Phi(v) \Bigr) \\
    \geq \norm[X]{v} \Bigl( \frac{N_p(v)}{\norm[X]{v}} -
    \tau \norm[X^*]{w + w^*} + \tau \frac{\Phi(u^*) - 
      \scp{w^*}{u^*}}{\norm[X]{v}}  
    \Bigr) \geq L \norm[X]{v}
  \end{multline*}
  for some $L > 0$ and all $v$ whose norm is large enough,
  showing that the functional in \eqref{eq:min_prob_aux_non_iter} is
  coercive.

  It follows that the functional in \eqref{eq:min_prob_aux_non_iter} is
  proper, convex, lower semi-con\-tin\-uous and coercive in a reflexive
  Banach space, consequently, at least one solution exists. 
  Finally, denote by $M$ the set of solutions of
  \eqref{eq:min_prob_aux_non_iter}, which is non-empty. Also, $M$ is
  convex and closed since the functional in
  \eqref{eq:min_prob_aux_non_iter} is convex and lower
  semi-continuous, respectively, in the reflexive Banach space $X$. 
  Consequently, by standard arguments from calculus of variations,
  \[
  \min_{v \in X} \ \norm[X]{v - u^n} + I_M(v)
  \]
  admits a solution. Thus, $u^{n+1}$ is well-defined.
\end{proof}

\begin{proposition}
  \label{prop:optimal_fixed_points}
  The solutions $u^*$ of the problem \eqref{eq:min_prob} 
  are exactly the fixed points of the iteration 
  (for each $\tau_n > 0$).
\end{proposition}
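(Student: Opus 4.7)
The plan is to observe that both being a minimizer of \eqref{eq:min_prob} and being a fixed point of the iteration $u^n \mapsto u^{n+1}$ are encoded by the same subdifferential inclusion, namely $0 \in F'(u^*) + \subgrad \Phi(u^*)$.

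First I would write down the two optimality conditions. Because $F$ is finite and continuous, the sum rule for subdifferentials applies (as already invoked in the proof of Proposition~\ref{prop:aux_prob_well_defined}); hence $u^*$ solves \eqref{eq:min_prob} iff $0 \in F'(u^*) + \subgrad \Phi(u^*)$. Similarly, with $J_p = \subgrad(\norm[X]{\placeholder}^p/p)$ as in Remark~1, an element $\tilde u$ solves the auxiliary problem \eqref{eq:min_prob_aux} with $u^n$ replaced by $u^*$ and $w^n = F'(u^*)$ iff
\[
0 \in J_p(\tilde u - u^*) + \tau_n F'(u^*) + \tau_n \subgrad \Phi(\tilde u).
\]

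The one step that needs a short justification is the identity $J_p(0) = \{0\}$. Any $\xi \in J_p(0)$ satisfies $\scp{\xi}{v} \leq \norm[X]{v}^p/p$ for every $v \in X$; replacing $v$ by $tv$ with $t > 0$, dividing by $t$ and letting $t \to 0^+$ (using $p > 1$) forces $\scp{\xi}{v} \leq 0$ for all $v$, hence $\xi = 0$. Plugging $\tilde u = u^*$ into the auxiliary optimality condition therefore collapses it, after dividing by $\tau_n > 0$, to $0 \in F'(u^*) + \subgrad \Phi(u^*)$, exactly the condition for \eqref{eq:min_prob}.

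Finally I would close the equivalence through the tie-breaking rule. The iterate $u^{n+1}$ is chosen to minimize $\norm[X]{v - u^n}$ among all solutions of \eqref{eq:min_prob_aux}; when $u^n$ itself lies in that solution set this minimum equals $0$ and is attained uniquely at $u^n$, so necessarily $u^{n+1} = u^n$. Consequently $u^* = u^n$ is a fixed point iff $u^*$ belongs to the solution set of \eqref{eq:min_prob_aux} centred at $u^*$, and by the subdifferential computation above this is in turn iff $u^*$ solves \eqref{eq:min_prob}. The only non-routine ingredient is the identification $J_p(0) = \{0\}$; the rest is bookkeeping with the sum rule for subdifferentials.
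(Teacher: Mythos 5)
Your proof is correct and follows essentially the same route as the paper: both directions are reduced to the subdifferential characterization of the auxiliary problem, the key fact $J_p(0)=\sett{0}$, and the tie-breaking rule for the forward direction. The only cosmetic difference is that you verify $J_p(0)=\sett{0}$ by a direct scaling argument (valid since $p>1$), whereas the paper cites Asplund's theorem; your version is self-contained and equally valid.
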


\begin{proof}
  Suppose that $u^* = u^n$ is optimal. Then, since $\subgrad(F+
  \Phi) =
  F' + \subgrad \Phi$ we know that $w^* = F'(u^*)$ also
  satisfies $-w^* \in \subgrad \Phi(u^*)$.
  Now, the solutions of \eqref{eq:min_prob_aux} can be
  characterized by each $v \in X$ solving the inclusion relation
  \[
  -\tau_nw^n \in J_p(v - u^n) + \tau_n \subgrad\Phi(v)
  \]
  with $J_p$ being the $p$-duality relation in $X$, i.e.~$J_p =
  \subgrad \frac1p\norm{\placeholder}^p$. 
  Obviously, $v =
  u^*$ is a solution and from the requirement that $u^{n+1}$ is the
  solution which minimizes $\norm[X]{v - u^n}$ among all solutions
  follows $u^{n+1} = u^n$, hence
  $u^*$ is a fixed point.
  
  On the other hand, if $u^n= u^{n+1}$, then, for the corresponding
  $w^n$ holds
  \begin{align*}
    -\tau_n w^n &\in J_p(u^{n+1} - u^n) + \tau_n \subgrad \Phi(u^{n+1}) \\
    \Rightarrow \qquad - \tau_n w^n &\in \tau_n \subgrad \Phi(u^n)
  \end{align*}
  since $J_p(0) = \sett{0}$ (by Asplund's theorem, see 
  \cite{cioranescu1990banach}), 
  meaning that $u^n$ is optimal 
  by $w^n = F'(u^n)$.
\end{proof}

These results justify in a way that the proposed iteration, which can
be interpreted as a  fixed-point iteration, makes
sense. It is, however, not clear whether we can achieve convergence
to a minimizer if the optimality conditions are not satisfied.
For this purpose, the algorithm has to be examined more deeply. 
We will use the descent of the objective functional in
\eqref{eq:min_prob} in order to obtain conditions under which
convergence holds.

\section{Descent properties and convergence}
\label{sec:descent_convergence}

This section deals with descent properties for the proximal
forward-backward splitting algorithm. 
It is shown that each iteration leads to a sufficient descent of the
functional whenever \eqref{eq:step_size_constraint} is satisfied.
Since the minimization problem
\eqref{eq:min_prob} is convex, we can also obtain convergence rates
for the functional, which will be done in the following.
The proof is essentially based on four steps which are in part
inspired by the argumentation in 
\cite{dunn1981gradientprojection,bredies2008harditer,
  bredies2008itersoftconvlinear}. 
After introducing
a functional $D(u^n)$ which measures the descent of the objective
functional, we prove a descent property which, subsequently,
leads to convergence rates for the distance of the functional 
values to the minimum. Finally, under assumptions on
the Bregman (and Bregman-Taylor) distances, convergence rates for
the distance to the minimizer follow.

\begin{lemma}
  For each iterate and each $v \in X$, 
  we have the inequality
  \begin{equation}
    \label{eq:iterate_dual_estimate}
    \frac{\scp{J_p(u^n - u^{n+1})}{v - u^{n+1}}}{\tau_n} \leq \Phi(v) -
    \Phi(u^{n+1}) + \scp{w^n}{v - u^{n+1}} \ . 
  \end{equation}
  Furthermore, with
  \begin{equation}
    \label{eq:iterate_descent_function}
    D(u^n) = \Phi(u^n) - \Phi(u^{n+1}) + \scp{w^n}{u^n - u^{n+1}} 
  \end{equation}
  it holds that
  \begin{gather}
    \label{eq:iterate_norm_estimate}
    \frac{\norm[X]{u^n - u^{n+1}}^p}{\tau_n} \leq D(u^n) \ .
  \end{gather}
\end{lemma}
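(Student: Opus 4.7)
The plan is to derive both inequalities from the optimality condition of the auxiliary minimization problem \eqref{eq:min_prob_aux} which defines $u^{n+1}$.

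First I would write the first-order optimality condition for $u^{n+1}$. Since the auxiliary objective in \eqref{eq:min_prob_aux} is the sum of the continuous convex function $v \mapsto \norm[X]{v-u^n}^p/p + \tau_n\scp{w^n}{v}$ and the proper lower semi-continuous convex $\tau_n \Phi$, the sum rule for subdifferentials (as in \cite{showalter1997monotoneoperators}) applies and yields
\[
0 \in J_p(u^{n+1} - u^n) + \tau_n w^n + \tau_n \subgrad \Phi(u^{n+1}),
\]
where $J_p = \subgrad\bigl(\norm[X]{\placeholder}^p/p\bigr)$ is the $p$-duality mapping of $X$. Using that $\norm[X]{\placeholder}^p$ is even so $J_p$ is odd, this rearranges to
\[
\frac{J_p(u^n - u^{n+1})}{\tau_n} - w^n \in \subgrad\Phi(u^{n+1}).
\]

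Next I would apply the definition of the subdifferential: for every $v \in X$,
\[
\Phi(v) \geq \Phi(u^{n+1}) + \Bigscp{\tfrac{J_p(u^n - u^{n+1})}{\tau_n} - w^n}{v - u^{n+1}},
\]
which upon rearrangement is exactly the first inequality \eqref{eq:iterate_dual_estimate}.

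For the second inequality \eqref{eq:iterate_norm_estimate} I would simply specialize $v = u^n$ in \eqref{eq:iterate_dual_estimate}. The right-hand side becomes $D(u^n)$ by the definition in \eqref{eq:iterate_descent_function}, while the left-hand side reduces via the identity $\scp{J_p(x)}{x} = \norm[X]{x}^p$ (characteristic of the $p$-duality mapping, cf.\ Asplund's theorem in \cite{cioranescu1990banach}) to $\norm[X]{u^n - u^{n+1}}^p/\tau_n$.

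The main obstacle is essentially bookkeeping rather than analysis: one must correctly invoke the subdifferential sum rule for the auxiliary functional, track the sign flip under $J_p(u^{n+1}-u^n) = -J_p(u^n - u^{n+1})$, and recall the self-duality identity $\scp{J_p(x)}{x} = \norm[X]{x}^p$. Once these are in place both inequalities fall out immediately.
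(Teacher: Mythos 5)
Your proposal is correct and follows essentially the same route as the paper: the first-order optimality condition for the auxiliary problem \eqref{eq:min_prob_aux} via the subdifferential sum rule, the subgradient inequality combined with the oddness of $J_p$ to obtain \eqref{eq:iterate_dual_estimate}, and the specialization $v = u^n$ together with $\scp{J_p(x)}{x} = \norm[X]{x}^p$ to obtain \eqref{eq:iterate_norm_estimate}. No substantive differences to report.
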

\begin{proof}
  Since $u^{n+1}$ solves \eqref{eq:min_prob_aux} with data $w^n$ and
  $u^n$, the subgradient relation
  \[
  - J_p(u^{n+1} - u^n) \in \tau_n \bigl( \subgrad \Phi(u^{n+1}) + w^n \bigr)
  \]
  holds and, consequently, by the subgradient inequality,
  \[
  \Phi(u^{n+1}) + \scp{w^n}{u^{n+1}} - \tau_n^{-1}\scp{J_p(u^{n+1} - u^n)}{v -
    u^{n+1}} \leq \Phi(v) + \scp{w^n}{v}
  \]
  for each $v \in X$. Rearranging terms and noting that $J_p(-u) =
  -J_p(u)$ for each $u \in X$ yields \eqref{eq:iterate_dual_estimate}.
  The inequality \eqref{eq:iterate_norm_estimate} then follows by
  letting $v = u^n$ and noting that $\scp{J_p(u^n - u^{n+1})}{u^n
    - u^{n+1}} = \norm[X]{u^n - u^{n+1}}^p$ by definition of the
  duality relation.
\end{proof}

Next, we prove a descent property which will be crucial for the 
convergence analysis. It will make use of the ``descent measure'' 
$D(u^n)$ introduced in~\eqref{eq:iterate_descent_function}.

\begin{proposition}
  \label{prop:descent_est}
  The iteration satisfies
  \begin{equation}
    \label{eq:descent_est}
    (F + \Phi)(u^{n+1}) \leq 
    (F+\Phi)(u^n) -  \Bigl( 1 - \tau_n
    \frac{\norm[p-1]{F'}}{p} \Bigr) D(u^n)
  \end{equation}
  with $D(u^n) \geq 0$ defined by \eqref{eq:iterate_descent_function}.
\end{proposition}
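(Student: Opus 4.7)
The plan is to combine a descent-type inequality for the smooth part $F$, obtained from the Hölder continuity of $F'$, with the norm estimate \eqref{eq:iterate_norm_estimate} already available for the iterates, and then simplify using the definition of $D(u^n)$.

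First, I would derive a ``descent lemma'' for $F$: for any $u,v$ in the bounded set where Hölder continuity of $F'$ holds,
\[
F(v) - F(u) - \scp{F'(u)}{v-u} = \int_0^1 \scp{F'(u + t(v-u)) - F'(u)}{v-u}\dd t.
\]
Bounding the integrand by $\norm[p-1]{F'}\, t^{p-1}\norm[X]{v-u}^p$ (the segment lies in the bounded set by convexity of the norm) and integrating gives
\[
F(v) \leq F(u) + \scp{F'(u)}{v-u} + \frac{\norm[p-1]{F'}}{p}\norm[X]{v-u}^p.
\]
Applying this with $u = u^n$, $v = u^{n+1}$, and recalling $w^n = F'(u^n)$, yields
\[
F(u^{n+1}) \leq F(u^n) + \scp{w^n}{u^{n+1}-u^n} + \frac{\norm[p-1]{F'}}{p}\norm[X]{u^{n+1}-u^n}^p.
\]

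Next, I would add $\Phi(u^{n+1})$ to both sides and rewrite the right-hand side using the definition \eqref{eq:iterate_descent_function}, which gives $\Phi(u^{n+1}) = \Phi(u^n) - D(u^n) - \scp{w^n}{u^{n+1}-u^n}$. The linear terms in $w^n$ cancel cleanly, producing
\[
(F+\Phi)(u^{n+1}) \leq (F+\Phi)(u^n) - D(u^n) + \frac{\norm[p-1]{F'}}{p}\norm[X]{u^{n+1}-u^n}^p.
\]
Finally, applying the estimate \eqref{eq:iterate_norm_estimate} to absorb $\norm[X]{u^{n+1}-u^n}^p$ into $\tau_n D(u^n)$ yields exactly \eqref{eq:descent_est}. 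The nonnegativity $D(u^n) \geq 0$ is immediate from \eqref{eq:iterate_norm_estimate} since the left-hand side there is nonnegative.

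The only subtle point is making sure the Hölder inequality is applicable on the entire segment joining $u^n$ and $u^{n+1}$; this is ensured inductively, because step~1 of the algorithm fixes $C$ so that $\norm[X]{u} \leq C$ for all $u$ with $(F+\Phi)(u) \leq (F+\Phi)(u^0)$, and the very estimate we are proving then guarantees that the sublevel set is preserved, so this is essentially bookkeeping rather than a genuine obstacle. The only real ingredient is the Hölder-based descent lemma for $F$; everything else is algebraic manipulation.
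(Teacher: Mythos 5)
Your proposal is correct and follows essentially the same route as the paper: the integral form of the first-order Taylor remainder of $F$ combined with the local H\"older continuity of $F'$ gives the bound $\frac{\norm[p-1]{F'}}{p}\norm[X]{u^{n+1}-u^n}^p$, which is then absorbed into $\tau_n D(u^n)$ via \eqref{eq:iterate_norm_estimate}. The only cosmetic difference is that you package the Taylor estimate as a standalone descent lemma before adding $\Phi$, whereas the paper starts from the identity \eqref{eq:functional_descent_char} and estimates the remainder in place.
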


\begin{proof}
  First note that from \eqref{eq:iterate_norm_estimate} follows that
  $D(u^n) \geq 0$ and, together with Proposition
  \ref{prop:optimal_fixed_points}, that $D(u^n) = 0$ if and only if
  $u^n$ is optimal.
  Using the definition of $D(u^n)$ gives
  \begin{equation}
    \label{eq:functional_descent_char}
    %\tag{$\ast$}
    (F+\Phi)(u^n) - (F+\Phi)(u^{n+1}) = D(u^n) + \bigl( F(u^n) -
    F(u^{n+1}) - \scp{w^n}{u^n - u^{n+1}} \bigr) \ .
  \end{equation}
  Note that we can write
  \begin{equation}
    \label{eq:taylor_rest}
    %\tag{$\ast\ast$}
    F(u^{n+1}) - F(u^n) - \scp{w^n}{u^{n+1} - u^n}
    = \int_0^1 \scp{w(t) - w^n}{u^{n+1} - u^n} \dd{t}
  \end{equation}
  with
  $w(t) = F'\bigl( u^n + t (u^{n+1} - u^n) \bigr)$. 
  We now want to estimate the absolute value of \eqref{eq:taylor_rest}
  in terms of $D(u^n)$:
  \begin{align*}
    \Bigl| \int_0^1 \langle w(t) - w^n &\ , \ u^{n+1} - u^n \rangle
    \dd{t}  \Bigr|
    \leq \int_0^1 \norm[X^*]{w(t) - w^n}\norm[X]{u^{n+1} - u^n}
    \dd{t} \\
    &\leq \int_0^1 \norm[p-1]{F'} \norm[X]{t(u^{n+1} - u^n)}^{p-1}
    \norm[X]{u^{n+1} - u^n} \dd{t} \\
    & = \frac{\norm[p-1]{F'}}{p} \norm[X]{u^{n+1} - u^n}^p \leq
    \frac{\tau_n \norm[p-1]{F'}}{p} D(u^n)
  \end{align*}
  by employing the Hölder-continuity assumption as well as the
  estimate \eqref{eq:iterate_norm_estimate}. The claimed statement
  finally follows from the combination of this with
  \eqref{eq:functional_descent_char}.
\end{proof}

Note that Proposition \ref{prop:descent_est} together with
\eqref{eq:step_size_constraint} yields a guaranteed descent of the
functional $F + \Phi$. Since in this case the boundedness $0 <
\underline{\tau} \leq \tau_n$ is also given, the convergence of the
functional values to the minimum is immediate as we will see in the 
following lemma.
But first, introduce the functional distance
\begin{equation}
  \label{eq:functional_dist}
  r_n = (F+\Phi)(u^n) - \bigl( \min_{u \in X} \ (F+\Phi)(u) 
  \bigr)
\end{equation}
which allows us to write~\eqref{eq:descent_est} as:
\begin{equation}
  \label{eq:descent_est_rest}
  r_n - r_{n+1} \geq \Bigl( 1 - \tau_n \frac{\norm[p-1]{F'}}{p} 
  \Bigr) D(u^n)  \ .
\end{equation}

\begin{lemma}
  \label{lem:descent_iterate_step}
  Assume that $F+\Phi$ is coercive or the sequence $\seq{u^n}$ is 
  bounded. Then, the sequence $\seq{r_n}$ according 
  to~\eqref{eq:functional_dist} 
  satisfies
  \[
  r_n - r_{n+1} \geq c_0 r_n^{p'}
  \]
  for each $n$, with $p'$ the dual exponent $1/p + 1/p' = 1$ and some
  $c_0 > 0$.
\end{lemma}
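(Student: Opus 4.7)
The plan is to combine the descent estimate \eqref{eq:descent_est_rest} with an upper bound of the form $r_n \leq c D(u^n)^{(p-1)/p} + D(u^n)$, which on rearrangement yields $D(u^n) \geq c_0 r_n^{p'}$.

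First, I would evaluate inequality \eqref{eq:iterate_dual_estimate} at $v = u^*$, a minimizer of $F + \Phi$, to produce
\[
\Phi(u^*) - \Phi(u^{n+1}) + \scp{w^n}{u^* - u^{n+1}} \geq \tau_n^{-1} \scp{J_p(u^n - u^{n+1})}{u^* - u^{n+1}}.
\]
Adding the identity $D(u^n) = \Phi(u^n) - \Phi(u^{n+1}) + \scp{w^n}{u^n - u^{n+1}}$ and subtracting the inequality, the $\Phi(u^{n+1})$ and $\scp{w^n}{u^{n+1}}$ terms cancel, leaving
\[
D(u^n) \geq \Phi(u^n) - \Phi(u^*) + \scp{w^n}{u^n - u^*} + \tau_n^{-1}\scp{J_p(u^n - u^{n+1})}{u^* - u^{n+1}}.
\]
By convexity of $F$, $\scp{w^n}{u^n - u^*} \geq F(u^n) - F(u^*)$, so the first three terms dominate $r_n$. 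Hence
\[
r_n \leq D(u^n) - \tau_n^{-1}\scp{J_p(u^n - u^{n+1})}{u^* - u^{n+1}}.
\]

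Next I would control the error term. Using $\norm[X^*]{J_p(u)} = \norm[X]{u}^{p-1}$, Cauchy--Schwarz/Hölder, and the a priori bound $\norm[X]{u^* - u^{n+1}} \leq 2C$ (which follows from the descent property $(F+\Phi)(u^{n+1}) \leq (F+\Phi)(u^0)$ via the coercivity/boundedness hypothesis), I get
\[
\tau_n^{-1}\bigabs{\scp{J_p(u^n - u^{n+1})}{u^* - u^{n+1}}} \leq 2C \tau_n^{-1} \norm[X]{u^n - u^{n+1}}^{p-1}.
\]
Then \eqref{eq:iterate_norm_estimate} gives $\norm[X]{u^n - u^{n+1}}^{p-1} \leq (\tau_n D(u^n))^{(p-1)/p}$, so with $\tau_n \geq \underline{\tau}$,
\[
r_n \leq D(u^n) + C_1 D(u^n)^{(p-1)/p}
\]
for a constant $C_1$ depending only on $C$, $\underline{\tau}$ and $p$.

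To finish, I would split into two cases according to which term on the right dominates. If $D(u^n)^{(p-1)/p} \geq D(u^n)$ (i.e.~$D(u^n)$ bounded by a fixed constant), then $r_n \leq (1+C_1) D(u^n)^{(p-1)/p}$, giving $D(u^n) \geq c' r_n^{p'}$. In the complementary case, $r_n \leq 2D(u^n)$; since $r_n$ is bounded uniformly by $r_0$ along the iteration, this yields $D(u^n) \geq r_n/2 \geq r_n^{p'}/(2 r_0^{p'-1})$. Combining both cases gives a uniform $c_0 > 0$ with $D(u^n) \geq c_0 r_n^{p'}$. Finally, \eqref{eq:descent_est_rest} together with the step-size bound $1 - \tau_n \norm[p-1]{F'}/p \geq \delta$ yields $r_n - r_{n+1} \geq \delta c_0 r_n^{p'}$, as required. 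The only delicate point is the uniform boundedness of $\norm[X]{u^* - u^{n+1}}$, which is where the coercivity/boundedness hypothesis enters.
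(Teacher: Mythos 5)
Your proposal is correct and follows essentially the same route as the paper: both test \eqref{eq:iterate_dual_estimate} at $v=u^*$, use convexity of $F$ to bound $r_n$ by $D(u^n)$ plus the duality-pairing term, control that term via $\norm[X^*]{J_p(u^n-u^{n+1})}=\norm[X]{u^n-u^{n+1}}^{p-1}$, the a priori bound on $\norm[X]{u^{n+1}-u^*}$, and \eqref{eq:iterate_norm_estimate}, and then absorb the linear term into the power term using boundedness of the sequence $\seq{r_n}$ (the paper substitutes $D(u^n)\leq\delta^{-1}(r_n-r_{n+1})$ first and uses $r_n-r_{n+1}\leq r_0$, whereas you case-split on $D(u^n)\lessgtr 1$ and use $r_n\leq r_0$ --- an equivalent piece of bookkeeping). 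The only blemish is that in your complementary case the constant should be $1+C_1$ rather than $2$, which changes nothing.
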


\begin{proof}
  First note that, according to Proposition \ref{prop:descent_est},
  $\seq{r_n}$ is a non-increasing sequence.
  If $F+ \Phi$ is coercive,
  this immediately means $\norm{u^n - u^*} \leq C_1$ for some
  minimizer $u^* \in X$ and all $n$. The same holds true if $\seq{u^n}$ 
  is already bounded.
  
  Observe that the convexity of $F$
  as well as \eqref{eq:iterate_dual_estimate} gives
  \begin{align*}
    r_n & \leq \Phi(u^n) - \Phi(u^*) + \scp{w^n}{u^n - u^*} \\
    & = \scp{w^n}{u^n - u^{n+1}} + \Phi(u^n) - \Phi(u^*) +
    \scp{w^n}{u^{n+1} - u^*} \\
    &= D(u^n)  + \Phi(u^{n+1}) - \Phi(u^*) + \scp{w^n}{u^{n+1} - u^*}
    \\
    &\leq D(u^n) + \tau_n^{-1}\scp{J_p(u^n - u^{n+1})}{u^{n+1} - u^*} \\
    & \leq D(u^n) + \tau_n^{-1} \norm[X^*]{J_p(u^n - u^{n+1})}
    \norm[X]{u^{n+1} - u^*}
    \ .
  \end{align*}
  Now, since $p > 1$,
  \begin{align*}
    \tau_n^{-1} \norm[X^*]{J_p(u^n - u^{n+1})} &= \tau_n^{-1} \norm[X]{u^n -
      u^{n+1}}^{p-1} \\
    &= \bigl( \tau_n^{-1} \norm[X]{u^n - u^{n+1}}^p
    \bigr)^{1/p'}\tau_n^{-1/p}
  \end{align*}
  where $p'$ is the dual exponent, i.e.~$\frac1p + \frac1{p'} = 1$.
  Further, applying \eqref{eq:iterate_norm_estimate},
  \eqref{eq:descent_est_rest} and taking the step-size constraint
  \eqref{eq:step_size_constraint} into account yields
  %, with $\delta =
  %1 - \overline{s} \norm[p-1]{F'}/p$,
  \[
  \delta r_n \leq r_n - r_{n+1} +  C_1 (r_n - r_{n+1})^{1/p'}
  (\delta \underline{\tau}^{-1})^{1/p} \ .
  \]
  Note that $r_n - r_{n+1} \leq r_0$ since $\seq{r_n}$ is
  non-increasing. This finally gives
  \begin{gather*}
    \delta r_n \leq \bigl(r_0^{1/p} + C_1 (\delta
    \underline{\tau}^{-1})^{1/p} \bigr)
    (r_n - r_{n+1})^{1/p'} \\
    \Rightarrow \quad
    \Bigl(
    \frac{\delta^{1/p'}\underline{\tau}^{1/p}}{(\delta^{-1}\underline{\tau}
      r_0)^{1/p}
      + C_1} \Bigr)^{p'} r_n^{p'} \leq r_n - r_{n+1}
    \ .
  \end{gather*}
\end{proof}

\begin{proposition}
  \label{prop:functional_descent_rate}
  Under the prerequisites of Lemma~\ref{lem:descent_iterate_step},
  the functional values for $\seq{u^n}$ converge to the
  minimum with rate
  \[
  r_n \leq C n^{1-p}
  \]
  for some $C > 0$.
\end{proposition}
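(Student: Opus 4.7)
The plan is to convert the one-step recurrence $r_n - r_{n+1} \geq c_0 r_n^{p'}$ provided by Lemma~\ref{lem:descent_iterate_step} into an explicit polynomial decay rate. The natural device is to study the sequence $s_n = r_n^{1-p'}$ rather than $r_n$ itself: since $1-p' < 0$, the map $x \mapsto x^{1-p'}$ is convex and decreasing on $(0,\infty)$, which turns the multiplicative decay of $r_n$ into an additive growth of $s_n$. Then $s_n$ grows at least linearly, and inverting gives the desired polynomial rate for $r_n$.

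First I would dispose of the degenerate case: if $r_{n_0} = 0$ for some $n_0$, then by monotonicity $r_n = 0$ for all $n \geq n_0$, the claimed bound holds trivially, and we may henceforth assume $r_n > 0$ for all $n$. Then, applying the subgradient inequality (equivalently, the tangent bound) to the convex function $\varphi(x) = x^{1-p'}$ at the point $r_n$,
\begin{equation*}
  s_{n+1} - s_n \;=\; r_{n+1}^{1-p'} - r_n^{1-p'}
  \;\geq\; (1-p')\, r_n^{-p'} \,(r_{n+1} - r_n).
\end{equation*}
Since $r_{n+1} - r_n \leq -c_0 r_n^{p'}$ by the lemma and $1-p' < 0$, the right-hand side is at least
\begin{equation*}
  (1-p')\, r_n^{-p'} \cdot (-c_0)\, r_n^{p'} \;=\; (p'-1)\, c_0 \;>\; 0.
\end{equation*}

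Telescoping this uniform lower bound yields $s_n \geq s_0 + (p'-1)c_0\, n$, so in particular $s_n \geq (p'-1)c_0\, n$ for $n \geq 1$. Inverting,
\begin{equation*}
  r_n \;\leq\; \bigl((p'-1)c_0\, n\bigr)^{-1/(p'-1)}.
\end{equation*}
A short computation with the conjugacy relation $1/p + 1/p' = 1$ gives $1/(p'-1) = p-1$, so $r_n \leq C n^{-(p-1)} = C n^{1-p}$ with $C = ((p'-1)c_0)^{-(p-1)}$, which is the claim.

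The only real obstacle is making sure the convexity-based tangent estimate is applied cleanly, i.e.\ that $r_n$ stays strictly positive (handled by the trivial termination case above) and that the algebra $1/(p'-1) = p-1$ is done carefully so that the exponent comes out exactly as $n^{1-p}$; everything else is routine telescoping.
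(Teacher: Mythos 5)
Your proof is correct and follows essentially the same route as the paper: both arguments show that $s_n = r_n^{1-p'}$ increases by at least the fixed amount $(p'-1)c_0$ per step and then telescope and invert, the only cosmetic difference being that you obtain the one-step estimate from the tangent-line (convexity) inequality for $x \mapsto x^{1-p'}$ where the paper uses the mean value theorem. Your explicit treatment of the degenerate case $r_{n_0}=0$ is a small point of extra care that the paper glosses over.
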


\begin{proof}
  Apply the mean value theorem to get the identity
  \[
  \frac{1}{r_{n+1}^{p'-1}} - \frac{1}{r_n^{p'-1}} = \frac{r_n^{p'-1} -
    r_{n+1}^{p'-1}}{(r_{n+1}r_n)^{p'-1}} = \frac{(p'-1)\vartheta^{p'-2}(r_n
    -r_{n+1})}{(r_{n+1}r_n)^{p'-1}}
  \]
  with $r_{n+1} \leq \vartheta \leq r_n$. Thus, $\vartheta^{p'-2} \geq
  r_{n+1}^{p'-1} r_n^{-1}$ and, by Lemma
  \ref{lem:descent_iterate_step},
  \[
  \frac{1}{r_{n+1}^{p'-1}} - \frac{1}{r_n^{p'-1}}
  \geq \frac{(p'-1)c_0 r_{n+1}^{p'-1}r_n^{p'-1}}{(r_{n+1}r_n)^{p'-1}}
  = (p'-1)c_0 \ .
  \]
  Summing up then yields
  \[
  \frac{1}{r_n^{p'-1}} - \frac{1}{r_0^{p'-1}} = \sum_{i=0}^{n-1} 
  \frac{1}{r_{i+1}^{p'-1}} - \frac{1}{r_i^{p'-1}} \geq n (p'-1) c_0
  \]
  and consequently,
  \[
  r_n^{p'-1} \leq \bigl( r_0^{1-p'} + c_0 (p'-1) n \bigr)^{-1}
  \quad \Rightarrow \quad r_n \leq C n^{1-p}
  \]
  since $1/(1-p') = 1-p$.
\end{proof}

This result immediately gives us weak subsequential convergence to
some minimizer.

\begin{proposition}
  \label{prop:weak_convergence}
  In the situation of Lemma~\ref{lem:descent_iterate_step},
  the sequence $\seq{u^n}$ possesses at least one weak accumulation
  point. Each weak accumulation point is a solution of
  \eqref{eq:min_prob}. In particular, if the minimizer $u^*$ is
  unique, then $u^n \wrightarrow u^*$.
\end{proposition}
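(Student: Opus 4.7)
The plan is to extract the three claims from the bound $r_n \to 0$ together with standard reflexivity/weak lower semi-continuity arguments.

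First I would establish boundedness of $\seq{u^n}$. Under the hypotheses of Lemma~\ref{lem:descent_iterate_step}, either $F+\Phi$ is coercive or $\seq{u^n}$ is already bounded by assumption. In the coercive case, the descent property from Proposition~\ref{prop:descent_est} gives $(F+\Phi)(u^n) \leq (F+\Phi)(u^0) < \infty$ for every $n$, so coercivity immediately yields a uniform bound $\norm[X]{u^n} \leq C$. Since $X$ is reflexive, the Eberlein--\v Smulian theorem then guarantees at least one weak accumulation point.

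Next I would show that any weak accumulation point $u^*$ is a minimizer. Let $u^{n_k} \wrightarrow u^*$. Since $F$ and $\Phi$ are proper, convex and lower semi-continuous, so is $F+\Phi$, and convex l.s.c.\ functionals on a Banach space are weakly lower semi-continuous (convex closed sublevel sets are weakly closed by Mazur's theorem). By Proposition~\ref{prop:functional_descent_rate}, $r_n \to 0$ at rate $n^{1-p}$, so
\[
(F+\Phi)(u^*) \leq \liminf_{k \to \infty} (F+\Phi)(u^{n_k}) = \min_{u \in X} (F+\Phi)(u),
\]
which forces $u^*$ to be a minimizer of~\eqref{eq:min_prob}.

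Finally, for uniqueness I would use the usual subsequence principle: if the minimizer $u^*$ is unique, then every weakly convergent subsequence of $\seq{u^n}$ must have $u^*$ as its limit (by the previous step). Since $\seq{u^n}$ is bounded in a reflexive space, if $u^n \not\wrightarrow u^*$ then one could extract a subsequence $u^{n_k}$ staying bounded away from $u^*$ in some weak neighborhood; reflexivity applied to this subsequence would yield a further weakly convergent sub-subsequence whose limit must again equal $u^*$, a contradiction. Hence $u^n \wrightarrow u^*$.

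There is no real obstacle here; the only thing to be a little careful about is the implicit use of weak lower semi-continuity for $F+\Phi$ (which is automatic for convex l.s.c.\ functionals) and the fact that coercivity plus monotonicity of $\seq{(F+\Phi)(u^n)}$ is what supplies the norm bound that activates Eberlein--\v Smulian.
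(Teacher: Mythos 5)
Your proposal is correct and follows essentially the same route as the paper: boundedness from coercivity (or by assumption) plus the descent property, a weakly convergent subsequence from reflexivity, weak lower semi-continuity of the convex l.s.c.\ functional $F+\Phi$ combined with $r_n \rightarrow 0$ to identify every weak accumulation point as a minimizer, and the standard subsequence argument for the uniqueness case. The paper states these steps more tersely, but there is no substantive difference.
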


\begin{proof}
  Since $r_n \leq n^{1-p}$, the sequence is a minimizing
  sequence, thus, due to the weak lower semi-continuity of $F+\Phi$,
  each weakly convergent subsequence is a minimizer. Moreover,
  it also follows that $\seq{u^n}$ is a bounded sequence 
  in the reflexive Banach space $X$,
  meaning that there is a weakly-convergent subsequence.
  The statement $u^n \wrightarrow u^*$ in case of uniqueness follows
  by the usual subsequence argument.
\end{proof}

To establish strong convergence, one has to examine the functionals
$F$ and $\Phi$ more closely. One approach is to consider the following
Bregman-like distance of $\Phi$ in solution $u^*$ of
\eqref{eq:min_prob}:
\begin{equation}
  \label{eq:bregman_distance}
  R(v) = \Phi(v) - \Phi(u^*) + \scp{F'(u^*)}{v - u^*}
\end{equation}
which is non-negative since $-F'(u^*) \in \subgrad \Phi(u^*)$. Note
that if $\subgrad \Phi(u^*)$ is consisting of one point, $R$ is indeed the
Bregman distance. By optimality of $u^*$ and the subgradient inequality, 
we have for the iterates
$u^n$ that
\[
r_n \geq \Phi(u^n) - \Phi(u^*) + \scp{F'(u^*)}{u^n - u^*} 
= R(u^n) 
\]
hence $R(u^n) \leq Cn^{1-p}$ by Proposition
\ref{prop:functional_descent_rate}. Also note that $R(u) = 0$ for
optimal $u$.
The usual way to achieve convergence is to postulate decay behaviour
for $R$ as the argument approaches $u^*$.

\begin{definition}
  \label{def:convexity_functionals}
  Let $\Phi: X \rightarrow \RR \cup \sett{\infty}$ be proper, convex 
  and lower semi-continuous. The functional $\Phi$ is called 
  \emph{totally convex} in $u^* \in X$, if, for each 
  $w \in \subgrad \Phi(u^*)$ and $\seq{u^n}$ it holds that
  \[
  \Phi(u^n) - \Phi(u^*) - \scp{w}{u^n - u^*} \rightarrow 0
  \quad \Rightarrow \quad 
  \norm[X]{u^n - u^*} \rightarrow 0 \ . 
  \]
  Likewise, $\Phi$ is \emph{convex of power-type $q$} (or $q$-convex) 
  in $u^* \in X$ with a $q \in {[{2,\infty}[}$, if for all 
  $M > 0$ and $w \in \subgrad \Phi(u^*)$ there exists a $c > 0$ such 
  that for all $\norm[X]{u - u^*} \leq M$ we have
  \[
  \Phi(u) - \Phi(u^*) - \scp{w}{u - u^*} \geq c \norm[X]{u - u^*}^q
  \ . 
  \]
\end{definition}

The notion of total convexity of functionals 
is well-known in the literature \cite{butnariu1997localmoduliconvexity,
  butnariu2000totallyconvexoptim}, convexity of power type $q$
is also referred to as \emph{$q$-uniform convexity} 
\cite{aze1995uniformlyconvexsmooth}. The former
term is, however, often used in conjunction with norms of Banach spaces
for which an equivalent definition in terms of the modulus of convexity
(or rotundity) is used \cite{xu1991inequalitybanach}.

Now, if $\Phi$ is \emph{totally convex} in $u^*$, then the sequence 
$\seq{u^n}$ also converges strongly to the minimizer since 
$R(u^n) \rightarrow 0$.
Additionally, the minimizer has to be unique since $\norm{u^{**} -
  u^*} > 0$ and $R(u^{**}) = 0$ would violate the total convexity
property.
The latter considerations prove:

\begin{theorem}
  \label{thm:total_convex_convergence}
  If $\Phi$ is totally convex, then $\seq{u^n}$ converges to the
  unique minimizer $u^*$ in the strong sense.
\end{theorem}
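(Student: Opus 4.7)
The plan is to assemble pieces already laid out in the text preceding the theorem. First I would invoke Proposition \ref{prop:functional_descent_rate} to conclude that the functional distances $r_n = (F+\Phi)(u^n) - \min(F+\Phi)$ satisfy $r_n \leq C n^{1-p}$, so in particular $r_n \to 0$. I note that the hypotheses of Lemma \ref{lem:descent_iterate_step} are met, since any minimizer $u^*$ of \eqref{eq:min_prob} exists by coercivity of $F + \Phi$, and total convexity of $\Phi$ at $u^*$ yields uniqueness (addressed below), so in particular the iterates form a bounded sequence once we also know convergence.

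Next I would connect $r_n$ to the Bregman-like distance $R$ from \eqref{eq:bregman_distance}. Pick any minimizer $u^*$; since $-F'(u^*) \in \subgrad \Phi(u^*)$ (this was established in the proof of Proposition \ref{prop:aux_prob_well_defined}), the subgradient inequality applied to $F$ at $u^*$ together with optimality gives
\[
r_n = (F+\Phi)(u^n) - (F+\Phi)(u^*) \geq \Phi(u^n) - \Phi(u^*) + \scp{F'(u^*)}{u^n - u^*} = R(u^n) \geq 0.
\]
Hence $R(u^n) \to 0$. By the definition of total convexity applied with $w = -F'(u^*) \in \subgrad \Phi(u^*)$ and the sequence $\seq{u^n}$, we conclude $\norm[X]{u^n - u^*} \to 0$, which is the desired strong convergence.

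Finally, I would dispatch uniqueness of the minimizer. Suppose $u^{**}$ is another minimizer, so that $(F+\Phi)(u^{**}) = (F+\Phi)(u^*)$. Then repeating the computation above with $u^n$ replaced by $u^{**}$ gives $R(u^{**}) = 0$. Applying total convexity to the constant sequence $u^n \equiv u^{**}$ yields $\norm[X]{u^{**} - u^*} = 0$, so $u^{**} = u^*$. Since total convexity is the only real ingredient beyond Proposition \ref{prop:functional_descent_rate}, there is no genuine obstacle; the only thing to double-check carefully is that $w = -F'(u^*)$ is an admissible choice in the definition of total convexity, which follows from the decomposition $\subgrad(F+\Phi) = F' + \subgrad \Phi$ used earlier.
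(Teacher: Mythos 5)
Your proposal is correct and follows the same route as the paper: bound $R(u^n)$ by $r_n$ via convexity of $F$ at $u^*$, invoke Proposition~\ref{prop:functional_descent_rate} to get $R(u^n)\to 0$, and apply total convexity with $w=-F'(u^*)\in\subgrad\Phi(u^*)$ for both convergence and uniqueness. The paper's argument is exactly this (stated in the paragraphs surrounding Definition~\ref{def:convexity_functionals}), so there is nothing to add.
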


\begin{remark}
  The notion of total convexity is well-known in the study of
  convergence of numerical algorithms and can be established for a
  variety of functionals of interest, for instance, for $\Phi(u) =
  \norm[Y]{u}^r$ if $Y$ is (locally) uniformly
  convex and $r > 1$ \cite{butnariu2000totalconvexity}. 
  If $Y$ is continuously embedded in $X$, then
  $\norm[X]{u^n - u^*} \leq C \norm[Y]{u^n - u^*}$, thus $R(u^n)
  \rightarrow 0$ implies $\norm[Y]{u^n - u^*} \rightarrow 0$ and
  consequently $\norm[X]{u^n - u^*} \rightarrow 0$.
\end{remark}

As one can easily see, the notion of $q$-convexity of $\Phi$ in 
$u^*$ is useful to obtain bounds for the speed of convergence:
Additionally to strong convergence ($q$-convex implies totally convex), 
since $\seq{u^n}$ is
bounded, one can choose a bounded neighborhood in which the sequence
is contained and obtains
\[
\norm[X]{u^n - u^*} \leq \bigl( c^{-1} R(u^n) \bigr)^{1/q} \leq ( c^{-1} 
r_n)^{1/q} \leq c^{-1/q} C^{1/q} n^{(1-p)/q} 
\]
for all $n$ meaning that $u^n \rightarrow u^*$ with asymptotic
rate $n^{(1-p)/q}$. So, we can note:

\begin{theorem}
  \label{thm:q-convex_convergence}
  If $\Phi$ is $q$-convex, then $\seq{u^n}$ converges to the
  unique minimizer $u^*$ with asymptotic rate $n^{(1-p)/q}$.
\end{theorem}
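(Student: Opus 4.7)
The plan is to chain three ingredients already established in the paper: (i) the functional-value rate $r_n = \kronO{n^{1-p}}$ from Proposition~\ref{prop:functional_descent_rate}, (ii) the observation made just before the theorem that $R(u^n) \leq r_n$ for the Bregman-like distance~\eqref{eq:bregman_distance}, and (iii) the quantitative lower bound on $R$ supplied by $q$-convexity. Since $q$-convex functionals are in particular totally convex, strong convergence and uniqueness of the minimizer will come essentially for free from Theorem~\ref{thm:total_convex_convergence}; the extra content here is only the quantitative rate.

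First I would verify that the sequence $\seq{u^n}$ stays in a bounded neighborhood of $u^*$ so that Definition~\ref{def:convexity_functionals} can be applied with a single constant. Because $\seq{r_n}$ is non-increasing by Proposition~\ref{prop:descent_est} and $F+\Phi$ is coercive, every iterate lies in the sub-level set $\set{u \in X}{(F+\Phi)(u) \leq (F+\Phi)(u^0)}$, which is bounded. Hence there exists $M > 0$ with $\norm[X]{u^n - u^*} \leq M$ for all $n$, where $u^*$ is any minimizer of~\eqref{eq:min_prob}.

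Next I would invoke $q$-convexity of $\Phi$ at $u^*$ with the choice $w = -F'(u^*) \in \subgrad \Phi(u^*)$ and this $M$, producing a constant $c > 0$ such that
\[
R(u^n) = \Phi(u^n) - \Phi(u^*) - \scp{-F'(u^*)}{u^n - u^*} \geq c\, \norm[X]{u^n - u^*}^q
\]
for every $n$. Combining this with $R(u^n) \leq r_n$ (noted in the discussion preceding the theorem) and the rate $r_n \leq C n^{1-p}$ from Proposition~\ref{prop:functional_descent_rate} gives
\[
\norm[X]{u^n - u^*}^q \leq c^{-1} R(u^n) \leq c^{-1} C\, n^{1-p},
\]
which yields the asserted $\kronO{n^{(1-p)/q}}$ rate upon taking $q$-th roots. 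Uniqueness of $u^*$ follows because any other minimizer $u^{**}$ would satisfy $R(u^{**}) = 0$ yet $\norm[X]{u^{**} - u^*} > 0$, contradicting the lower bound.

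I do not anticipate any serious technical obstacle; the only point requiring a moment of care is ensuring that the constant $c$ in the $q$-convexity inequality can be chosen uniformly in $n$, which is precisely why the preliminary boundedness step (via coercivity and monotonicity of $\seq{r_n}$) has to be done before applying Definition~\ref{def:convexity_functionals}.
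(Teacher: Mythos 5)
Your proposal is correct and follows essentially the same route as the paper: the paper likewise combines the boundedness of $\seq{u^n}$, the $q$-convexity lower bound $R(u^n) \geq c\norm[X]{u^n - u^*}^q$ with $w = -F'(u^*)$, and the chain $R(u^n) \leq r_n \leq C n^{1-p}$ from Proposition~\ref{prop:functional_descent_rate} to obtain the rate $n^{(1-p)/q}$, with uniqueness handled exactly as you describe. No gaps.
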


\begin{remark}
  Again, a variety of functionals is $q$-convex, typically for $q
  \geq 2$. The notion translates to norms as follows:
  If $Y$ is a Banach space which is convex of
  power-type $q$ for $q > 1$ (see \cite{lindenstrauss1997banach2} 
  for an introduction to this notion) which is
  continuously embedded in $X$, then $\Phi(u) = \norm[Y]{u}^q$
  is $q$-convex. Consequently, one obtains convergence with rate
  $n^{(1-p)/q}$.

  Note that many known spaces are $q$-convex. For instance,  if $r > 1$,
  each $\LPspace{r}{\Omega}$ is convex of power-type $\max\sett{2,r}$ 
  for arbitrary measure spaces and the respective constants 
  are known, see 
  \cite{hanner1956uniformconvexitylp,meir1984uniformconvexitylp}. 
  The analog applies to Sobolev
  spaces $\Hspace[r]{m}{\Omega}$ associated with arbitrary domains
  $\Omega$: They are also convex of power-type $\max\sett{2,r}$.
  In fact, as a consequence of a theorem of Dvoretsky 
  \cite{dvoretzky1961convexbodiesbanach}, 
  any Banach space can be at most convex of power-type $2$
  \cite{lindenstrauss1997banach2}.
\end{remark}

While the Bregman-distance (or the Bregman-like distance $R$) turns
out to be successful in proving convergence for the minimization
algorithm for many functionals, there are some cases, in which $\Phi$
fails to be $q$-convex or totally convex. In these situations, one can
also take the Taylor-distance of $F$ into account, which is the
remainder of the Taylor-expansion of $F$ up to order $1$:
\begin{equation}
  \label{eq:taylor_distance}
  T(v) = F(v) - F(u^*) - \scp{F'(u^*)}{v -u^*}
\end{equation}
Since $F$ is convex, $T(v) \geq 0$, hence one can indeed speak of some
distance (which is in general not symmetric or satisfying the triangle 
inequality). 
In fact, the Taylor-distance is also a Bregman-distance, 
but here we make the distinction in order to emphasize that
the Taylor- and Bregman-distances are the respective smooth and 
non-smooth concepts for measuring distances by means of 
functionals.

With the introduction of $R$ and $T$, the distance of $(F+\Phi)(v)$
to its minimizer can be expressed as
\begin{equation}
  \label{eq:taylor_bregman_split}
  (F+\Phi)(v) - (F+\Phi)(u^*) = T(v) + R(v) \ , 
\end{equation}
which means that it can be split into a Bregman part (with respect to
$\Phi$) and a Taylor part (with respect to $F$). 
In some situations,
this splitting can be useful, especially for minimizing 
functionals of the Tikhonov-type, as the following 
section shows.

\section{Application to Tikhonov functionals}
\label{sec:application_tikhonov}

Consider the general problem of minimizing a typical 
Tikhonov-type functional for a linear inverse problem,
\begin{equation}
  \label{eq:gen_linear_tikhonov}
  \min_{u \in X} \ \frac{\norm[Y]{Ku - f}^r}{r} + \alpha
  \frac{\seminorm[Z]{u}^s}{s}
\end{equation}
where $r > 1$, $s \geq 1$, $\alpha > 0$,
$X$ is a reflexive Banach space, $Y$ a 
Banach space with $K: X \rightarrow Y$ continuously and
some given data $f \in Y$. Finally, let $\seminorm[Z]{\placeholder}$ 
be a semi-norm of the (not necessarily reflexive) 
Banach space $Z$ which is continuously embedded in $X$.
It becomes a minimization problem 
of type~\eqref{eq:min_prob} with
\begin{equation}
  \label{eq:gen_tikhonov_func_split}
  F(u) = \frac{\norm[Y]{Ku - f}^r}{r} \qquad , 
  \qquad \Phi(u) = \frac{\alpha
    \seminorm[Z]{u}^s}{s}   
\end{equation}
with the usual extension $\Phi(u) = \infty$ 
whenever $u \in X \without Z$.
The aim of this section is to analyze~\eqref{eq:gen_linear_tikhonov} 
with respect to the convergence of the forward-backward splitting 
algorithm applied to~\eqref{eq:gen_tikhonov_func_split}.

First of all, we focus on the minimization problem and specify
which semi-norms we want to allow as regularization functionals. Also,
we need a condition on the linear operator $K$.

\begin{condition}
  \label{cond:feasible_seminorm_linop}
  \mbox{}
  \begin{enumerate}
  \item Let $\seminorm[Z]{\placeholder}$ be a semi-norm on the Banach
    space $Z$ such that $Z = Z_0 \directsum Z_1$ with 
    $Z_0 = \set{z \in Z}{\seminorm[Z]{z} = 0}$  closed in $X$
    and $\seminorm[Z]{\placeholder}$ being equivalent to 
    $\norm[Z]{\placeholder}$ on $Z_1$.
  \item
    Let $K: X \rightarrow Y$ be a
    linear and continuous operator such that 
    $K$, restricted to $Z_0 \subset X$ is continuously 
    invertible.
  \end{enumerate}
\end{condition}

\begin{remark}
  The first condition in Condition~\ref{cond:feasible_seminorm_linop} 
  is satisfied for many 
  semi-norms used in practice. For example, consider
  $X = \LPspace{t}{\Omega}$ and $Z = \Hspace[t]{1}{\Omega}$ with  
  $\seminorm[Z]{z} = \norm[t]{\grad z}$.
  Assume that $\Omega$ is a bounded domain, then 
  $Z_0 = \linspan{\chi_\Omega}$ is closed in $X$ and 
  $Z_1 = \set{z \in \Hspace[t]{1}{\Omega}}{\int_\Omega z \dd{x} = 0}$, 
  for instance, gives $Z = Z_0 \directsum Z_1$. 
  Provided that the
  Poincaré-Wirtinger inequality holds (with constant $C$), 
  we have for $z \in Z_1$
  \[
  \norm[t]{z} + \norm[t]{\grad z} \leq (C+1) \norm[t]{\grad z} 
  \leq (C+1) \bigl( \norm[t]{z} + \norm[t]{\grad z} \bigr)
  \]
  meaning that $\seminorm[Z]{\placeholder}$ and $\norm[Z]{\placeholder}$
  are indeed equivalent on $Z_1$.
  
  Moreover, note that by considering $X/(\kernel{K} \cap Z_0)$ and 
  $Z/(\kernel{K} \cap Z_0)$ instead of $X$ and $Z$, respectively, 
  the second condition in 
  Condition~\ref{cond:feasible_seminorm_linop} is satisfied
  whenever the range of $K$ restricted to $Z_0$ is closed
  in $Y$ (by the open mapping theorem). 
  This is in particular the case when $Z_0$ is 
  finite-dimensional.
\end{remark}

Let us briefly obtain the existence of minimizers under the above 
conditions.

\begin{proposition}
  \label{prop:gen_tikhonov_ex}
  Under the assumption that
  Condition~\ref{cond:feasible_seminorm_linop} is satisfied, 
  the minimization problem~\eqref{eq:gen_linear_tikhonov} 
  possesses at least one solution in $X$.
\end{proposition}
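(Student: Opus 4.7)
The plan is the direct method of the calculus of variations. The novelty compared to a standard Tikhonov existence argument is that the regularizer is only a semi-norm, so $\Phi(u)\to\infty$ does not force $\norm[X]{u}\to\infty$; Condition~\ref{cond:feasible_seminorm_linop} is precisely what restores coercivity. First, since $u=0\in Z$ gives $F(0)+\Phi(0)=\norm[Y]{f}^r/r<\infty$, the infimum is finite and a minimizing sequence $\seq{u^n}$ exists, which we may assume to lie in $Z$ as $\Phi\equiv+\infty$ on $X\without Z$. Along this sequence both $F(u^n)$ and $\Phi(u^n)$ are bounded.

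Next, I would derive a bound $\norm[X]{u^n}\leq C$ using both parts of the condition. Decompose $u^n=u_0^n+u_1^n$ with $u_0^n\in Z_0$, $u_1^n\in Z_1$ according to the direct sum in Condition~\ref{cond:feasible_seminorm_linop}(i). Since $\seminorm[Z]{u_0^n}=0$, boundedness of $\Phi(u^n)$ forces $\seminorm[Z]{u_1^n}$ to be bounded, and the equivalence of $\seminorm[Z]{\placeholder}$ with $\norm[Z]{\placeholder}$ on $Z_1$ then yields a uniform bound on $\norm[Z]{u_1^n}$, hence on $\norm[X]{u_1^n}$ and on $\norm[Y]{Ku_1^n}$. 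The triangle inequality $\norm[Y]{Ku_0^n}\leq\norm[Y]{Ku^n-f}+\norm[Y]{f}+\norm[Y]{Ku_1^n}$ together with boundedness of $F(u^n)$ produces a bound on $\norm[Y]{Ku_0^n}$. Condition~\ref{cond:feasible_seminorm_linop}(ii) supplies a continuous inverse of $K|_{Z_0}$, so $\norm[X]{u_0^n}$ is bounded, and summing the two bounds finishes the $X$-boundedness of $\seq{u^n}$.

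Reflexivity of $X$ now yields $u^n\wrightarrow u^*$ along a subsequence, and to conclude I would verify weak lower semi-continuity of $F+\Phi$ at $u^*$. For $F$ this is routine: $K$ is weakly continuous as a bounded linear operator, so $Ku^n-f\wrightarrow Ku^*-f$ in $Y$, and $r^{-1}\norm[Y]{\placeholder}^r$ is weakly lsc. For $\Phi$, I would use convexity plus Mazur's theorem to reduce the task to strong lower semi-continuity on $X$. This last step is where I expect the main obstacle, since $Z$ is not assumed reflexive: one must show that the sublevel sets $\bigsett{u\in X:\seminorm[Z]{u}\leq C}$ are strongly closed in $X$. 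I would verify this by running the $Z_0\directsum Z_1$ decomposition a second time on a convergent sequence $v^n\to v$ with $\seminorm[Z]{v^n}\leq C$, using strong closedness of $Z_0$ in $X$ and the $X$-boundedness of the $Z_1$-component (obtained again from the seminorm/norm equivalence on $Z_1$) to weakly extract limits in the reflexive $X$, identify $v=v_0^*+v_1^*$ as the corresponding decomposition, and pass to the limit in the semi-norm via its convexity. Once strong (hence weak) lsc of $\Phi$ is in hand, weak lsc of $F+\Phi$ combined with the infimum property of $\seq{u^n}$ gives $(F+\Phi)(u^*)=\inf_{u\in X}(F+\Phi)(u)$.
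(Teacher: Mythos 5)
Your boundedness argument for minimizing sequences is correct and follows the same skeleton as the paper's coercivity proof: decompose along $Z = Z_0 \directsum Z_1$, control the $Z_1$-part through the penalty and the seminorm/norm equivalence on $Z_1$ (using $\seminorm[Z]{u_1^n}=\seminorm[Z]{u^n}$ and the embedding $Z\embeds X$), control $\norm[Y]{Ku_0^n}$ through the data term, and invert $K$ on $Z_0$ via Condition~\ref{cond:feasible_seminorm_linop}(ii). Your bound $\norm[Y]{Ku_0^n}\leq\norm[Y]{Ku^n-f}+\norm[Y]{f}+\norm[Y]{Ku_1^n}$ is in fact more elementary than the paper's route, which obtains the same conclusion through a second-order Taylor expansion of $t\mapsto t^r/r$ and a contradiction argument; the paper also records that the projection onto $Z_1$ along $Z_0$ is continuous by the closed graph theorem, which is the clean justification that the decomposition is uniformly controlled.

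The divergence, and the gap, is in the lower semicontinuity step. The paper does not prove weak lsc of $\Phi$ here at all: it establishes coercivity and then invokes the standing assumption (from Section~\ref{sec:algorithm}) that $F$ and $\Phi$ are proper, convex and lower semicontinuous on the reflexive space $X$, after which existence is standard. You instead try to derive lsc of $\Phi$ from Condition~\ref{cond:feasible_seminorm_linop}, and your sketch cannot be completed under those hypotheses alone: since $Z$ is allowed to be non-reflexive, the $\norm[Z]{\placeholder}$-bounded sequence $v_1^n$ of $Z_1$-components only admits a weak limit $v_1^*$ in $X$, and nothing forces $v_1^*$ to lie in $Z_1$ (or even in $Z$), nor the seminorm to be lsc along the sequence. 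Concretely, take $X=\LPspace{d/(d-1)}{\Omega}$, $Z=\Hspace[1]{1}{\Omega}$ with $\seminorm[Z]{z}=\norm[1]{\grad z}$, $Z_0$ the constants and $Z_1$ the zero-mean functions: Condition~\ref{cond:feasible_seminorm_linop}(i) holds by Poincar\'e--Wirtinger, yet the sublevel sets $\sett{\seminorm[Z]{u}\leq C}$ are not closed in $X$ (their closure is the corresponding $\BV$ sublevel set), so the strong closedness you want to verify genuinely fails. This does not threaten the proposition as the paper intends it, since lsc of $\Phi$ is a hypothesis there rather than a consequence of Condition~\ref{cond:feasible_seminorm_linop}; but your proof should either assume lsc of $\Phi$ outright, as the paper does, or add an assumption (e.g.\ reflexivity of $Z_1$) under which your extraction argument does identify the limit in $Z_1$ and pass to the limit in the seminorm.
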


\begin{proof}
  First of all, note that since $Z = Z_0 \directsum Z_1$ with 
  $Z_0$ and $Z_1$ closed, there is, due to the closed-graph theorem, 
  a continuous 
  projection $P: Z \rightarrow Z$ such that
  $\range{P} = Z_1$ and $\kernel{P} = Z_0$. With this projection,
  $\norm[Z]{P\placeholder}$ is equivalent to 
  $\seminorm[Z]{\placeholder}$ (see also 
  \cite{kunisch1993regularizationclosed} for a similar
  situation). 
  
  Verify in the following that $F + \Phi$ is coercive in $X$.
  Suppose that $\Phi(u^n) \leq C_1$ for some sequence
  $\seq{u^n} \subset X$. In particular, $\seq{u^n} \subset Z$ by
  the definition of $\Phi$ and
  $\norm[X]{Pu^n} \leq C_2 \norm[Z]{Pu^n} \leq C_3
  \seminorm[Z]{u^n} \leq C_4$ meaning that $Pu^n$ makes sense and
  is bounded in $X$ whenever $\Phi(u^n)$ is bounded.
  Likewise, examining $F$, we get with $Q = I-P: Z \rightarrow Z_0$ 
  and Taylor expansion that
  \begin{align*}
    F(u) &= \frac{\norm[Y]{Ku - f}^r}{r} 
    \geq \frac1r\bigabs{\norm[Y]{KPu -
        f} - \norm[Y]{KQu}}^r \\
    &\geq \frac{\norm[Y]{KPu - f}^r}{r} - \norm[Y]{KPu -
      f}^{r-1}\norm[Y]{KQu} \\
    & \phantom{\geq \ } + \frac{r-1}{2} \max \
    \sett{\norm[Y]{KPu - f}, \norm[Y]{KQu}}^{r-2} \norm[Y]{KQu}^2 \ .
  \end{align*}
  Now suppose there is a sequence $\seq{u^n}$ where
  $F+\Phi$ is bounded. The claim is that $\norm[Y]{KQu^n}$ is also
  bounded. Assume the opposite, i.e.~that  $\norm{KQu^n} \rightarrow
  \infty$. From the above argumentation we have $\norm[X]{Pu^n}
  \leq C_4$, hence $\norm[Y]{KPu^n - f} \leq C_5$. Consequently, 
  we can assume, without loss of generality, 
  that $\norm[Y]{KQu^n} \geq \norm[Y]{KPu^n
    - f}$. The estimate on $F$ then reads as
  \[
  F(u^n) \geq - C_5^{r-1}\norm[Y]{KQu^n} + (r-1) \norm[Y]{KQu^n}^r
  \]
  implying that $F(u^n) \rightarrow \infty$, which is a contradiction.
  
  Consequently, there holds $\norm[Y]{KQu^n} \leq C_6$. 
  By Condition~\ref{cond:feasible_seminorm_linop}, 
  $K$ is continuously invertible on $Z_0 \subset X$, so finally
  \[
  \norm[X]{u^n} \leq \norm[X]{Qu^n} + \norm[X]{Pu^n}
  \leq C_7 \norm[Y]{KQu^n} + C_4 \leq C_8
  \]
  showing that $F+\Phi$ is coercive.
  
  Consequently, $F+\Phi$ is proper, convex, lower
  semi-continuous and coercive on the reflexive Banach space $X$,
  so at least one minimizer exists.
\end{proof}

Next, we turn to examining the problem more closely and
some properties of the functionals in \eqref{eq:gen_linear_tikhonov}.
Both $F$ and $\Phi$ are convex, proper and lower semi-continuous.
The differentiability of $F$, however, is strongly connected with 
the differentiability of the norm in $Y$, since $K$ is arbitrarily 
smooth. Since the  forward-backward splitting algorithm demands also 
some local Hölder-continuity for $F'$, 
we have to impose conditions. The following notion is known to be
directly connected with the differentiability of the norm and the 
continuity of the derivatives.

\begin{definition}
  A Banach space $Y$ is called \emph{smooth of power-type $p$} 
  (or $p$-smooth) with $p \in {]{1,2}]}$, if there exists a
  constant $C > 0$ such that for all $u,v \in Y$ it holds that
  \[
  \frac{\norm[Y]{v}^{p}}{p} - \frac{\norm[Y]{u}^{p}}{p}
  -\scp{j_{p}(u)}{v - u} \leq C \norm[Y]{v - u}^{p} \ .
  \]
  Here, $j_{p} = \subgrad \norm[Y]{\placeholder}^{p}/p$ 
  denotes the $p$-duality mapping 
  between $Y$ and $Y^*$.
\end{definition}

Now the central result which connects $p$-smoothness with 
differentiability of the norm is the following 
(see \cite{xu1991charinequalities}):

\begin{proposition}
  \label{prop:p_smoothness_norm_diff}
  If $Y$ is smooth of power-type $p$, then 
  $\norm[Y]{\placeholder}^{p}/p$ is continuously differentiable
  with derivative $j_{p}$ which is moreover 
  $(p-1)$ Hölder-continuous.

  Furthermore, for $r \geq p$, the functional 
  $\norm[Y]{\placeholder}^r/r$ is 
  continuously differentiable. Its derivative is given by $j_r$
  which is still $(p - 1)$ Hölder-continuous on each bounded 
  subset of $Y$.
\end{proposition}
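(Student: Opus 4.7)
I would proceed in three steps. \textbf{Step 1: Gateaux differentiability of $\phi_p:=\norm[Y]{\placeholder}^p/p$.} Since convexity already gives $j_p(u)\in\subgrad\phi_p(u)$, I need only show the subdifferential is single-valued. Given any $w\in\subgrad\phi_p(u)$, subtracting the subgradient inequality for $w$ from the $p$-smoothness inequality yields $\scp{j_p(u)-w}{v-u}\leq C\norm[Y]{v-u}^p$ for every $v\in Y$. Substituting $v=u+th$ and letting $t\to 0^+$, using $p>1$, forces $\scp{j_p(u)-w}{h}\leq 0$ for every direction $h$, hence $w=j_p(u)$.

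\textbf{Step 2: $(p-1)$-Hölder continuity of $j_p$.} Adding the $p$-smoothness inequality for $(u,v)$ to the one obtained by exchanging $u$ and $v$ produces the monotonicity estimate $\scp{j_p(u)-j_p(v)}{u-v}\leq 2C\norm[Y]{u-v}^p$, which only controls a duality pairing. To upgrade this to $\norm[Y^*]{j_p(u)-j_p(v)}\leq L\norm[Y]{u-v}^{p-1}$, I would invoke the dual characterization (the heart of Xu's theorem): $Y$ is $p$-smooth if and only if $Y^*$ is $p'$-convex with $1/p+1/p'=1$, and $j_p^{-1}$ is then the $p'$-duality map of $Y^*$. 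Applying the $p'$-convexity inequality in $Y^*$ to $f=j_p(u)$, $g=j_p(v)$ inverts the pairing bound into the desired dual-norm estimate. This passage from pairing to dual-norm bound is the main obstacle, as the one-line monotonicity computation gives no hint of it by itself.

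\textbf{Step 3: The case $r\geq p$.} Writing $\norm[Y]{u}^r/r = g(\phi_p(u))$ with $g(t)=(pt)^{r/p}/r$, which is $C^1$ on $[0,\infty)$ precisely because $r/p\geq 1$, continuous differentiability and the formula $j_r(u)=\norm[Y]{u}^{r-p}j_p(u)$ follow from Step 1 via the chain rule. For local $(p-1)$-Hölder continuity on $\set{u\in Y}{\norm[Y]{u}\leq M}$, I would decompose
\[
j_r(u)-j_r(v)=\norm[Y]{u}^{r-p}\bigl(j_p(u)-j_p(v)\bigr)+\bigl(\norm[Y]{u}^{r-p}-\norm[Y]{v}^{r-p}\bigr)j_p(v),
\]
bound the first term by $M^{r-p}L\norm[Y]{u-v}^{p-1}$ via Step 2, and bound the second by combining $\norm[Y^*]{j_p(v)}=\norm[Y]{v}^{p-1}$ with elementary estimates for $|a^{r-p}-b^{r-p}|$; a case split according to whether $\norm[Y]{v}$ is smaller or larger than $\norm[Y]{u-v}$ yields, in either regime and for every $r\geq p$, a bound of order $\norm[Y]{u-v}^{r-1}$ or $\norm[Y]{u-v}^{p-1}$, which collapses to $\norm[Y]{u-v}^{p-1}$ on bounded sets since $r-1\geq p-1$.
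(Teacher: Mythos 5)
The paper does not actually prove this proposition --- it is quoted from Xu's characterization of $p$-smooth spaces (the reference \cite{xu1991charinequalities}) --- so your argument has to stand on its own, and it essentially does. Steps 1 and 3 are elementary and correct: dividing by $t$ and using $p>1$ does force single-valuedness of $\subgrad(\norm[Y]{\placeholder}^p/p)$, and in Step 3 the case distinction $\norm[Y]{v}\lessgtr 2\norm[Y]{u-v}$ really does close the estimate (one gets a bound of order $\norm[Y]{u-v}^{r-1}$ when $\norm[Y]{v}$ is comparable to or smaller than $\norm[Y]{u-v}$, and of order $\norm[Y]{u-v}^{\min\sett{1,r-1}}$ otherwise, both dominated by $\norm[Y]{u-v}^{p-1}$ on bounded sets since $r-1\geq p-1$ and $p\leq 2$). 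The honest caveat is Step 2: as you yourself acknowledge, the upgrade from the pairing bound $\scp{j_p(u)-j_p(v)}{u-v}\leq 2C\norm[Y]{u-v}^p$ to the dual-norm estimate is obtained by importing the duality theorem ($Y$ is $p$-smooth iff $Y^*$ is $p'$-convex, with $j_p^{-1}=j_{p'}$ on $Y^*$; legitimate, since $p$-smoothness implies uniform smoothness and hence reflexivity), and that duality theorem is precisely the substance of the cited reference. Once imported, the computation is right: symmetrizing the $p'$-convexity inequality at $f=j_p(u)$, $g=j_p(v)$ gives $2c\norm[Y^*]{j_p(u)-j_p(v)}^{p'}\leq\scp{j_p(u)-j_p(v)}{u-v}\leq 2C\norm[Y]{u-v}^{p}$, and $p/p'=p-1$ yields the claimed exponent. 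So what you have is a correct reduction of the proposition to the smoothness--convexity duality rather than a proof from scratch --- a reasonable trade, given that the paper simply outsources the entire statement.
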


Assuming $r \geq p$ and that $Y$ is $p$-smooth gives us, 
together with the usual differentiation rules, that $F$ is
differentiable with derivative
\[
F'(u) = K^*j_r(Ku - f) \ .
\]
Since $K$ is continuous, $j_r$ is Hölder continuous of order $p-1$ on 
bounded sets by Proposition~\ref{prop:p_smoothness_norm_diff}
and keeping in mind that $\norm[p-1]{j_r}$ 
might vary on bounded sets, we can estimate the norm 
$\norm[p-1]{F'}$ on bounded sets as follows:
\[
\norm[p-1]{F'} \leq \norm{K^*} \norm[p-1]{j_r} \norm{K}^{p-1} \leq
\norm[p-1]{j_r} \norm{K}^p \ .
\]
Hence, $F'$ possesses the Hölder-continuity required by the
convergence results for the 
forward-backward splitting algorithm.

As already noted in Theorems~\ref{thm:total_convex_convergence} and
\ref{thm:q-convex_convergence}, convexity of the penalty can lead to
convergence with some rate. Here, we also want to extend the results
to certain convexity of semi-norms which we define as follows:

\begin{definition}
  \label{def:seminorm_convexity}
  Let $\seminorm[Z]{\placeholder}$ be a semi-norm according to
  Condition~\ref{cond:feasible_seminorm_linop}. 
  Then, the functional $\Phi = \alpha \seminorm[Z]{\placeholder}^s/s$ 
  is called \emph{totally convex},
  if, for each fixed $z^* \in Z$, 
  each sequence $\seq{z^n}$ in $Z$ and
  $\zeta \in \subgrad\Phi(z^*)$ there holds
  \[
  \alpha \frac{\seminorm[Z]{z^n}^s}{s} - \alpha 
  \frac{\seminorm[Z]{z^*}^s}{s} 
  - \scp{\zeta}{z^n - z^*}
  \rightarrow 0 
  \quad \Rightarrow \quad \seminorm[Z]{z^n - z^*} \rightarrow 0 \ .
  \]
  Likewise, $\Phi = \frac{\seminorm[Z]{\placeholder}^q}{q}$ 
  is called \emph{convex of power-type $q$} in $z^*$ 
  if, for each $M > 0$ and $\zeta \in \subgrad\Phi(z^*)$ 
  there exists a $c>0$ such that for all $\norm[Z]{z - z^*} \leq M$ 
  the estimate
  \[
  \alpha \frac{\seminorm[Z]{z}^q}{q} - 
  \alpha \frac{\seminorm[Z]{z^*}^q}{q} - \scp{\zeta}{z - z^*}  
  \geq c \seminorm[Z]{z - z^*}^q
  \]
  is satisfied.
\end{definition}

\begin{remark}
  \label{rem:convexity_other_powers}
  Analogously to Proposition~\ref{prop:p_smoothness_norm_diff}, one 
  knows that if a $\seminorm[Z]{\placeholder}^q/q$ is convex
  of power-type $q$ on bounded sets for some $q \geq 2$, then the functional 
  $\Phi = \seminorm[Z]{\placeholder}^s/s$ is also convex
  of power-type $q$ for all $1 < s \leq q$ on bounded sets.
  The same holds true for the convexity of power-type for norms 
  according to Definition~\ref{def:convexity_functionals}.
\end{remark}

With this notion, one is able to give convergence statements and rates for
the algorithm applied to the minimization 
problem~\eqref{eq:gen_linear_tikhonov}.

\begin{theorem}
  \label{thm:gen_tikhonov_convergence}
  If, in the situation of Proposition~\ref{prop:gen_tikhonov_ex},
  the space $Y$ is smooth of power-type $p$ for $p \leq r$ 
  and the functional $\Phi = \alpha\seminorm[Z]{\placeholder}^s/s$ 
  as well as the norm in
  $Y$ is totally convex, then 
  $\seq{u^n}$ converges to the unique minimizer $u^*$.
  If, moreover, $\seminorm[Z]{\placeholder}^q/q$ 
  as well as $Y$ 
  is convex of power-type $q$ for $q \geq \max\ \sett{r,s,2}$ 
  in a minimizer $u^*$, 
  then $u^n \rightarrow u^*$ in $X$ with rate $\kronO{n^{(1-p)/q}}$.
\end{theorem}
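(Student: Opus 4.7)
The plan is to use the Bregman–Taylor split~\eqref{eq:taylor_bregman_split}, which at a minimizer $u^*$ reads
\[
r_n = T(u^n) + R(u^n) \leq C n^{1-p}
\]
by Proposition~\ref{prop:functional_descent_rate}, with both summands non-negative. The key observation is that $T$ is itself a Bregman distance: since $F' = K^* j_r(K\placeholder - f)$, one computes
\[
T(u) = \frac{\norm[Y]{Ku-f}^r}{r} - \frac{\norm[Y]{Ku^*-f}^r}{r} - \scp{j_r(Ku^*-f)}{K(u-u^*)},
\]
i.e.~the Bregman distance of $\norm[Y]{\placeholder}^r/r$ evaluated at $Ku-f$ and $Ku^*-f$. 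Analogously, using $-F'(u^*) \in \subgrad \Phi(u^*)$, $R(u) = \alpha\seminorm[Z]{u}^s/s - \alpha\seminorm[Z]{u^*}^s/s - \scp{-F'(u^*)}{u-u^*}$ is exactly the Bregman-type quantity appearing in Definition~\ref{def:seminorm_convexity}.

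For the total-convexity part, $R(u^n) \to 0$ together with total convexity of $\Phi$ yields $\seminorm[Z]{u^n - u^*} \to 0$, while $T(u^n) \to 0$ together with total convexity of $\norm[Y]{\placeholder}^r/r$ (which on bounded sets follows from total convexity of the norm in $Y$ combined with Proposition~\ref{prop:p_smoothness_norm_diff} and the boundedness of $\seq{u^n}$) yields $\norm[Y]{K(u^n-u^*)} \to 0$. Now the splitting $Z = Z_0 \directsum Z_1$ from Condition~\ref{cond:feasible_seminorm_linop}, with continuous projection $P$ onto $Z_1$ and $Q = I - P$ onto $Z_0$, is used: on $Z_1$ the seminorm is equivalent to $\norm[Z]{\placeholder}$ and $Z \embeds X$, so $\norm[X]{P(u^n-u^*)} \to 0$. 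Writing $KQ(u^n - u^*) = K(u^n-u^*) - KP(u^n-u^*)$, continuity of $K$ gives $KQ(u^n-u^*) \to 0$ in $Y$, and since $K$ is continuously invertible on $Z_0$ by Condition~\ref{cond:feasible_seminorm_linop}, we conclude $\norm[X]{Q(u^n-u^*)} \to 0$ and hence $u^n \to u^*$ in $X$. Uniqueness of $u^*$ follows because any second minimizer $u^{**}$ would satisfy $T(u^{**}) = R(u^{**}) = 0$, and the same decomposition argument then forces $u^{**} = u^*$.

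For the convergence rate, $q$-convexity of $\seminorm[Z]{\placeholder}^q/q$ implies $q$-convexity of $\seminorm[Z]{\placeholder}^s/s$ on bounded sets by Remark~\ref{rem:convexity_other_powers} (using $s \leq q$), so on the bounded orbit $\seq{u^n}$ we obtain
\[
R(u^n) \geq c_1 \seminorm[Z]{u^n - u^*}^q.
\]
Likewise, $q$-convexity of $Y$ together with Remark~\ref{rem:convexity_other_powers} upgrades to $q$-convexity of $\norm[Y]{\placeholder}^r/r$ on bounded sets (using $r \leq q$), so
\[
T(u^n) \geq c_2 \norm[Y]{K(u^n - u^*)}^q.
\]
Combining with $r_n = T(u^n) + R(u^n) = \kronO{n^{1-p}}$ gives $\seminorm[Z]{u^n - u^*} = \kronO{n^{(1-p)/q}}$ and $\norm[Y]{K(u^n-u^*)} = \kronO{n^{(1-p)/q}}$, and the same $P$/$Q$ decomposition as above transfers both bounds to $\norm[X]{u^n - u^*} = \kronO{n^{(1-p)/q}}$.

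The main obstacle is that neither $R$ nor $T$ separately controls $\norm[X]{u^n - u^*}$: $R$ only sees the seminorm (blind on $Z_0$), and $T$ only sees $K(u^n - u^*)$ (blind on $\kernel{K}$). The crucial ingredient is therefore Condition~\ref{cond:feasible_seminorm_linop}, which guarantees exactly the complementary structure — a topological direct sum $Z = Z_0 \directsum Z_1$ on which these two quantities together control $\norm[X]{\placeholder}$.
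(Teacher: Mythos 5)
Your proposal is correct and follows essentially the same route as the paper: the Bregman--Taylor split $r_n = R(u^n) + T(u^n)$ with rate $n^{1-p}$ from Proposition~\ref{prop:functional_descent_rate}, the direct-sum structure $Z = Z_0 \directsum Z_1$ of Condition~\ref{cond:feasible_seminorm_linop} with the projections $P$ and $Q$ to combine control of $\seminorm[Z]{u^n-u^*}$ and $\norm[Y]{K(u^n-u^*)}$ into control of $\norm[X]{u^n-u^*}$, and the same $q$-convexity estimates for the rate. The only deviation is cosmetic: where the paper bounds $\norm[Y]{KQu}^q$ by $\norm[Y]{KPu}^q$ and $\norm[Y]{Ku}^q$ via a second-order Taylor expansion of $t\mapsto\abs{t}^q/q$ and a Young-type inequality, you use $KQ(u^n-u^*)=K(u^n-u^*)-KP(u^n-u^*)$ and the triangle inequality, which is more elementary and yields the same estimate~\eqref{eq:gen_tikh_conv_est}.
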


\begin{proof}
  First verify that the sequence $\seq{u^n}$ is a minimizing
  sequence for which the associated $r_n$ vanish like $n^{1-p}$
  which means verifying the prerequisites of 
  Proposition~\ref{prop:functional_descent_rate}.
  Both $F$ and $\Phi$ are proper, convex and lower semi-continuous in
  the reflexive Banach space $X$, $F'$ is $(p - 1)$ 
  Hölder-continuous on each bounded
  set by Proposition~\ref{prop:p_smoothness_norm_diff} and 
  we already saw in Proposition~\ref{prop:gen_tikhonov_ex} 
  that $F+ \Phi$ is coercive.
  Hence, Proposition~\ref{prop:functional_descent_rate} is 
  applicable. It remains to show the asserted strong convergence. 
  As already mentioned at the end of 
  Section~\ref{sec:descent_convergence},
  the Bregman-distance is no longer sufficient in order to show 
  convergence, so we have to use Taylor-Bregman-distance
  estimates.

  For that purpose, consider the 
  Bregman-like distance $R$ according to~\eqref{eq:bregman_distance} 
  and Taylor-distance $T$ introduced in~\eqref{eq:taylor_distance} 
  (both with respect to $F$ and $\Phi$ as chosen 
  in~\eqref{eq:gen_tikhonov_func_split} and in the 
  minimizer $u^*$).
  Remember that we can 
  split the functional distance $r_n$ according
  to~\eqref{eq:functional_dist} into Bregman and Taylor 
  parts~\eqref{eq:taylor_bregman_split}, 
  i.e.~$r_n = R(u^n) + T(u^n)$. 
  
  First suppose that $\seminorm[Z]{\placeholder}$ is totally convex, 
  meaning that from
  \[
  r_n \geq R(u^n) = \alpha \frac{\seminorm[Z]{u^n}^s}{s} - 
  \alpha \frac{\seminorm[Z]{u^*}^s}{s} + \scp{F'(u^*)}
  {u^n - u^*}
  \]
  and $r_n \leq C_1 n^{1-p}$ 
  (see Proposition~\ref{prop:functional_descent_rate})
  follows $\seminorm[Z]{u^n - u^*} \rightarrow 0$.
  On the other hand, 
  observe analogously that
  \begin{align*}
    r_n \geq T(u^n) &= 
    \frac{\norm[Y]{K u^n - f}^r}{r} -
    \frac{\norm[Y]{K u^* - f}^r}{r} \\
    \notag
    &\phantom{=} - 
    \scp{j_r(K u^* - f)}{(K u^n - f) - (K u^* - f)}
  \end{align*}
  so the total convexity of $\norm[Y]{\placeholder}$ implies
  $\norm[Y]{K(u^n - u^*)} \rightarrow 0$.
  
  \sloppy Note that since $\seq{u^n}$ is a minimizing
  sequence, we can reuse the arguments from 
  Proposition~\ref{prop:gen_tikhonov_ex} as well as the projections $P$ 
  and $Q$ to obtain that $\seq{u^n} \subset Z$ and
  $\norm[Z]{P(u^n - u^*)} \leq C_3$.
  Thus, we consider $u \in Z$ in the following and are eventually
  able to set $u = u^n - u^*$. 
  It holds that $\range{Q} = Z_0$, 
  hence exploiting again the second part of 
  Condition~\ref{cond:feasible_seminorm_linop} gives
  \[
  \norm[Y]{KQu}^q
  \geq c_1\norm[X]{Q u}^q \ .
  \]
  Using the integral form of the Taylor expansion of
  $\tfrac1q\abs{\placeholder}^q$ up to order $2$ yields
  \begin{align*}
    \frac{\norm[Y]{Ku}^q}{q} &\geq
    \frac1q \bigabs{\norm[Y]{KQ u} - 
      \norm[Y]{-KP u}}^q \\
    &\geq \frac{\norm[Y]{KQ u}^q}{q}
    - \norm[Y]{KQ u}^{q-1} \norm[Y]{KP u} 
    + \frac{q-1}{q} \norm[Y]{KP u}^q \\
    & \geq \frac{\norm[Y]{KQ u}^q}{2q} - 
    \bigl( (2(q-1))^{q-1} - (q-1)\bigr) 
    \frac{\norm[Y]{KP u}^q}{q} 
  \end{align*}
  where, for the latter, the inequality 
  $a^{q-1}b\leq (q')^{-1}\varepsilon^{q'}a^q + q^{-1}\varepsilon^{-q} b^q$ for 
  $a,b \geq 0$ has been utilized.
  
  Together with the estimate on $\norm[Y]{KQ u}^q$ and the continuity
  of $K$, one gets
  \begin{align*}
    \norm[X]{Q u}^q &\leq c_1^{-1} \norm[Y]{KQ u}^q 
    \leq C_5 \bigl( 
    \norm[Y]{KP u}^q + \norm[Y]{K u}^q \bigr) \\
    & \leq C_6 \bigl( 
    C_7 \norm{K}^q  \seminorm[Z]{u}^q + \norm[Y]{Ku}^q \bigr)
  \end{align*}
  and consequently
  \begin{align}
    \label{eq:gen_tikh_conv_est}
    \notag
    \norm[X]{u^n - u^*}^q &\leq 
    2^{q-1} \bigl( \norm[X]{P(u^n - u^*)}^q
    + \norm[X]{Q(u^n - u^*)}^q  \bigr) \\
    %\tag{$\ast$}
    & \leq C_8 \bigl( \seminorm[Z]{u^n - u^*}^q 
    + \norm[Y]{K(u^n - u^*)}^q \bigr) \ .
  \end{align}
  Since $R(u^n) \rightarrow 0$ and $T(u^n) \rightarrow 0$ 
  imply $\seminorm[Z]{u^n - u^*} \rightarrow 0$ as well as 
  $\norm[Y]{K(u^n - u^*)} \rightarrow 0$, respectively, 
  we have convergence 
  $u^n \rightarrow u^*$ in $X$.
  
  Regarding the uniqueness, assume that $u^{**}$ is also a minimizer, 
  hence $R(u^{**}) = T(u^{**}) = 0$ and $u^{**} \in Z$. 
  The total convexity then yields 
  $\seminorm[Z]{u^{**} - u^*} = 0 \Rightarrow  
  \norm[Z]{P(u^{**} - u^*)} = 0$ as well as 
  $\norm[Y]{K(u^{**} - u^*)} = 0$. 
  From the latter follows $\norm[Z]{Q(u^{**} - u^*)} = 0$ and 
  consequently the uniqueness statement $\norm[Z]{u^{**} - u^*} = 0$.
 
  In case $\seminorm[Z]{\placeholder}^q/q$ is $q$-convex, we can write,
  having Remark~\ref{rem:convexity_other_powers} in mind,
  \begin{equation}
    \label{eq:gen_tikh_conv_bregman}
    %\tag{$\ast\ast$}
    \seminorm[Z]{u^n - u^*}^q \leq c_2^{-1}   
    \Bigl( \alpha \frac{\seminorm[Z]{u^n}^s}{s} - \alpha 
    \frac{\seminorm[Z]{u^*}^s}{s} + \scp{F'(u^*)}
    {u^n - u^*} \Bigr)
    = C_9 R(u^n) \ .
  \end{equation}
  where $c_2$ depends on the $X$-norm bound of the sequence 
  $\seq{u^n}$.
  On the other hand, if $Y$ is $q$-convex,
  the Taylor-distance can be estimated as follows:
  \begin{align}
    \label{eq:gen_tikh_conv_taylor}
    \notag
    T(u^n) &= \frac{\norm[Y]{Ku^n - f}^r}{r} -
    \frac{\norm[Y]{Ku^* - f}^r}{r} \\
    \notag
    &\phantom{=} - 
    \scp{j_r(Ku^* - f)}{(Ku^n - f) - (Ku^* - f)} \\
    %\tag{$\ast{\ast}\ast$}
    & \geq c_3 \norm[Y]{K(u^n - u^*)}^q \ .
  \end{align}
  Hence, \eqref{eq:gen_tikh_conv_est} together with
  \eqref{eq:gen_tikh_conv_bregman} and \eqref{eq:gen_tikh_conv_taylor}
  becomes, because 
  of~\eqref{eq:taylor_bregman_split},
  \[
  \norm[X]{u^n - u^*}^q \
  \leq C_8\bigl(C_9R(u^n) + c_3^{-1} T(u^n) \bigr)
  \leq C_{10} r_n \ .
  \]
  But $r_n \leq C_1 n^{1-p}$, hence $u^n \rightarrow u^*$
  with rate $\kronO{n^{(1-p)/q}}$.
\end{proof}

\section{Examples and Applications}
\label{sec:examples_applications}

This section demonstrates some applications
for the iterative minimization procedure discussed 
in this paper.

We start with an example which is of rather general nature. 
It shows that the forward-backward splitting procedure amounts 
to an iterative thresholding-like algorithm when applied to linear
inverse problems with sparsity constraints. Afterwards, numerical
computations showing the performance of this algorithm in the
discrete case are presented.

\begin{example}
  \label{ex:sparsity}
  Consider a linear
  inverse problem with sparsity constraints in Banach spaces.
  The setting can be summarized as follows: We are given a linear
  and continuous operator $K: \lpspace{r} \rightarrow Y$ where
  $r > 1$, $Y$ is a $p$-smooth and $q$-convex 
  Banach space with $p \leq r \leq q$, $q \geq 2$ 
  and some data $f \in Y$ and 
  want to solve the problem of minimizing the Tikhonov functional
  \begin{equation}
    \label{eq:ex_sparsity_tikh_func}
    \min_{u \in \lpspace{r}} \ \frac{\norm[Y]{Ku - f}^r}{r} + 
    \sum_{k=1}^\infty \alpha_k \frac{\abs{u_k}^s}{s}
  \end{equation}
  where $\seq{\alpha_k}$ is a sequence bounded by $0 < 
  \underline{\alpha} \leq \alpha_k \leq \overline{\alpha} < \infty$ and
  $1 \leq s \leq r$.

  Such situations occur, for example, when one tries to solve 
  $Au = f$ for some linear and continuous 
  $A: X \rightarrow Y$, $X$ being an 
  $\LPspace{r}{\RR^d}$ or a Besov space $B^{\sigma,r}_r(\RR^d)$ which
  is equivalently described using a properly rescaled basis 
  synthesis operator $B: \lpspace{r} \rightarrow X$ with respect
  to appropriate scaling functions/wavelets 
  \cite{meyer1992wavelets}. Utilizing the 
  basis-expansion coefficients for regularization (giving the $s$-th
  power of a norm which is equivalent to some $B^{\sigma',s}_s(\RR^d)$) 
  and denoting $K = AB$ then leads to Tikhonov functionals of the
  type~\eqref{eq:ex_sparsity_tikh_func}, see 
  \cite{daubechies2003iteratethresh} for details.
  
  How does the associated
  forward-backward splitting algorithm look like?
  First of all, we assume that the duality map $j_r$ in $Y$ can be
  computed. This is for example the case for 
  $Y = \LPspace{p^*}{\RR^d}$:
  \[
  j_r(u) = \sign{u} \abs{u}^{p^* - 1} \norm[p^*]{u}^{r - p^*} \ .
  \]
  Note that $\LPspace{p^*}{\RR^d}$ is also smooth of power-type
  $p \leq \min\sett{2,p^*}$ and one can, without greater effort,
  compute estimates for the Hölder-constants $\norm[p - 1]{j_r}$ on
  bounded sets provided that $p \leq r$.
  Consequently, it is reasonable to assume that 
  $F'(u) = K^*j_r(Ku - f)$ is computationally accessible as well as 
  the constants needed for~\eqref{eq:step_size_constraint}.
  
  The main difficulty is therefore computing solutions 
  of~\eqref{eq:min_prob_aux} which reads as, denoting 
  $w^n = K^*j_r(Ku^n - f)$,
  \begin{equation}
    \label{eq:ex_sparsity_aux_prob}
    \min_{v \in \lpspace{r}} \ 
    \sum_{k=1}^\infty  \frac{r\norm[r]{v - u^n}^{p - r}}{p} 
    \frac{\abs{v_k - u^n_k}^r}{r}  
    + \tau_n \Bigl( w^n_k v_k + \alpha_k \frac{\abs{v_k}^s}{s} \Bigr) \ .
  \end{equation}
  It is easy to see that the only coupling between different $k$
  is via the factor $z = \tfrac{p}{r}\norm[r]{v - u^n}^{r - p}$ 
  (if $p \neq r$), so one can take the minimizers of 
  \[
  \Psi_{y,\sigma,t}(x) = 
  \frac{\abs{x - y}^r}{r} + \sigma x + t \frac{\abs{x}^s}s
  \]
  as ansatz with $\sigma = zs_nw^n_k$ and 
  $t = z\tau_n \alpha_k$. Since $\sigma$ enters only linearly in
  $\Psi_{y,\sigma,t}$, it is convenient to write  
  $\argmin_{x\in\RR} \ \Psi_{y,\sigma,t}(x) = 
  (\subgrad \Psi_{y,t})^{-1}(-\sigma)$ with $\Psi_{y,t} 
  = \tfrac1r \abs{x-y}^r + \tau \frac{\abs{x}^s}s$.
  The latter can be expressed as 
  $(\subgrad\Psi_{y,t})^{-1} = S_{y,t}$,
  \begin{equation}
    \label{eq:ex_sparsity_thresholding_like}
    S_{y,t}(x) =
    \begin{cases}
      \bigl(\sign{\placeholder - y} \abs{\placeholder - y}^{r-1} + t 
      \sign{\placeholder} \abs{\placeholder}^{s-1} \bigr)^{-1}(x) &
      \text{for} \ s > 1 \\
      y + \sign{T_{y,t}(x)}\abs{T_{y,t}(x)}^{1/(r-1)}
      & \text{for} \ s = 1
    \end{cases}
  \end{equation}
  and
  \[
  T_{y,t}(x) =
  \begin{cases}
    x + t & \text{for} \ x \leq -\sign{y}\abs{y}^{r-1} - t \\
    x - t & \text{for} \ x \geq -\sign{y}\abs{y}^{r-1} + t \\
    -\sign{y}\abs{y}^{r-1} & \text{else.} 
  \end{cases}
  \]
  \sloppy It is notable that for $s=1$, each $S_{y,t}$ is a 
  thresholding-like function (see Figure \ref{fig:thresholding_like}).
  For $r=2$, one can 
  easily verify the identity
  $S_{y,t}(x) = \sign{x+y} \bigl[ \abs{x+y} - t \bigr]_+$
  meaning that $S_{y,t}$ is the well-known soft-thresholding.
  In order to get a solution for~\eqref{eq:ex_sparsity_aux_prob}, 
  one introduces the operator which applies $S_{y,t}$ pointwise,
  i.e.~$\mathbf{S}_{u,t}(w)_k = S_{u_k,t_k}(w_k)$ such 
  that optimality is achieved if and only if
  \[
  v = \mathbf{S}_{u^n,z\tau_n\alpha}(-z\tau_nw^n) \quad , \quad z = 
  \tfrac{p}{r}\norm[r]{v - u^n}^{r-p} \ .
  \]
  For $r = p$ follows $z = 1$, so one can express the solution 
  explicitly, for the other cases, it is necessary to compute the value
  for $z$ numerically. 

  \begin{figure}
    \centering
    \begin{tabular}{cc}
      \begin{tikzpicture}[xscale=0.5,yscale=1,thick]
        \draw[->] (-3,0) -- (3,0) node[right] {$x$};
        \draw[->] (0,-1) -- (0,2) node[left] {$S_{y,t}(x)$};
        %\draw ({-1*(abs(0-0.75)^0.3)},-0.2) 
        %-- ({-1*(abs(0-0.75)^0.3)},0.2);
        
        \draw[gray] ({1.5*abs(0.75)^0.3},0) -- ({1.5*abs(0.75)^0.3},0.75) 
        -- (0,0.75);
        \draw ({1.5*abs(0.75)^0.3},-0.2) -- ({1.5*abs(0.75)^0.3},0.2);
        \draw (-0.4,0.75) node[left] {$y$} -- (0.4,0.75);
        
        \clip (-3,-1) rectangle (3,3);
        \draw[blue] plot[domain=-2:0] 
        ({(-1)*abs(\x-0.75)^0.5-1.5*abs(\x)^0.3},\x)
        -- plot[domain=0.001:0.751] 
        ({(-1)*abs(\x-0.75)^0.5+1.5*abs(\x)^0.3},\x)
        -- plot[domain=0.749:2] ({abs(\x-0.75)^0.5+1.5*abs(\x)^0.3},\x);
      \end{tikzpicture} &
      \begin{tikzpicture}[xscale=0.5,yscale=1,thick]
        \draw[->] (-3,0) -- (3,0) node[right] {$x$};
        \draw[->] (0,-1) -- (0,2) node[left] {$S_{y,t}(x)$};
        \draw ({-1*(abs(0-0.75)^0.3-1.25)},-0.3) 
        -- ({-1*(abs(0-0.75)^0.3-1.25)},0.2);
        \draw ({-1*(abs(0-0.75)^0.3+1.25)},-0.3) 
        -- ({-1*(abs(0-0.75)^0.3+1.25)},0.2);
        \draw[<->] ({-1*(abs(0-0.75)^0.3-1.25)},-0.2) -- 
        node[fill=white] {$2t$}
        ({-1*(abs(0-0.75)^0.3+1.25)},-0.2);
        
        \draw[gray] (1.25,0) -- (1.25,0.75) -- (0,0.75);
        \draw (1.25,-0.2) -- (1.25,0.2);
        \draw (-0.4,0.75) node[left] {$y$} -- (0.4,0.75);
        
        \clip (-3,-1) rectangle (3,3);
        \draw[blue] plot[domain=-2:0] 
        ({(-1)*abs(\x-0.75)^0.3-1.25},\x)
        -- plot[domain=0.001:0.751] 
        ({(-1)*abs(\x-0.75)^0.3+1.25},\x)
        -- plot[domain=0.749:2] ({abs(\x-0.75)^0.3+1.25},\x);
      \end{tikzpicture} \\
      $y = 0.75, t=1.5$ & $y = 0.75, t = 1.25$  \\
      $r=1.5$, $s=1.3$ & $r=1.3, s=1$
    \end{tabular}
    
    \caption{Illustration of the thresholding-like functions according to 
      \eqref{eq:ex_sparsity_thresholding_like} for some parameters.}
    \label{fig:thresholding_like}
  \end{figure}
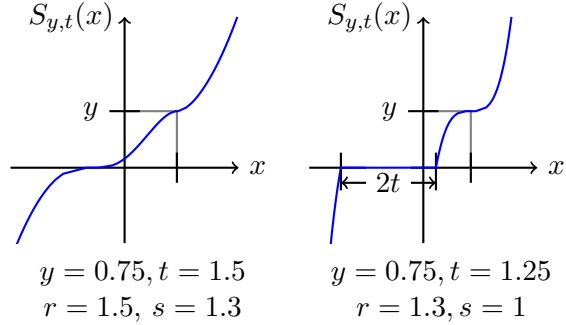

  To introduce some simplification, note that for $u^0 = 0$ follows
  $\norm[r]{u}^s \leq \norm[s]{u}^s \leq \underline{\alpha}^{-1} \Phi(u) 
  \leq (F+\Phi)(u^0)$ yielding $\norm[r]{u} \leq \bigl( \norm[Y]{f}^r/
  (\underline{\alpha}r) \bigr)^{1/s}$. 
  The forward-backward splitting algorithm
  for the solution of~\eqref{eq:ex_sparsity_tikh_func} then
  reads as follows.
  \begin{enumerate}
  \item Initialize with $n=0$, $u^0 = 0$ 
    and choose $\norm[p - 1]{F'}$ locally
    for $\norm[r]{u} \leq  \bigl( \norm[Y]{f}^r/
    (\underline{\alpha}r) \bigr)^{1/s}$.
  \item
    Compute the value 
    \[
    w^n = K^*j_r(Ku^n - f)
    \]
    and solve, for a $\tau_n$ satisfying~\eqref{eq:step_size_constraint},
    the scalar equation
    \[
    u^{n+1} = \mathbf{S}_{u^n,z_n\tau_n\alpha}(-z_n\tau_nw^n) \quad , \quad z_n = 
    \tfrac{p}{r}\norm[r]{u^{n+1} - u^n}^{r - p} \ .
    \]
  \item
    Take, if optimality is not reached, $u^{n+1}$ as the next iterate and
    continue with Step 2 and $n :=  n+1$.
  \end{enumerate}

  It is notable that for $s = 1$, the iterates are always sparse, as
  a consequence of the optimality condition for the auxiliary problem
  \eqref{eq:ex_sparsity_aux_prob} and 
  the fact that sequences in $\lpspace{r'}$ are always null-sequences.
  Regarding the convergence of the algorithm, one easily verifies that
  Theorem~\ref{thm:gen_tikhonov_convergence} is applicable for $s > 1$
  since $\Phi$ is the $s$-th power of a equivalent norm on $\lpspace{r}$
  which is moreover $q$-convex and $Y$ is,
  by assumption, $q$-convex (although this is actually not needed).
  It follows that in that case we have the convergence rate
  $\kronO{n^{(1-p)/q}}$.

  For the case $s=1$, $\Phi$ is not even strictly convex and
  things are a little bit more complicated. But basically, one only
  has to apply the tools already introduced in a slightly different manner.
  Also, the following condition turns out to be crucial for the 
  argumentation: Let us assume that $K$ possesses the 
  \emph{finite basis injectivity} property, that is that for every
  finite index set $J \subset \NN$ the restriction $K|_J$ is injective 
  (meaning that from $Ku =0$ and $u_k = 0$ for each $k \notin J$ follows 
  $u=0$). Under such an assumption, it is also possible to obtain a
  convergence rate of $\kronO{n^{(1-p)/q}}$ where $q$ is the convexity of
  the space $Y$.

  This can be seen analogously to the proof of 
  Theorem~\ref{thm:gen_tikhonov_convergence} and also follows
  a similar line of argumentation in \cite{bredies2008itersoftconvlinear}.
  Observe that for an optimal 
  $u^*$ it holds that $-F'(u^*) = 
  -K^*j_r(Ku^* - f) \in \subgrad \Phi(u^*)$ or, somehow weaker,
  \[
  u^* \ \text{optimal} \quad \Rightarrow \ 
  \begin{cases}
    \abs{K^*j_r(Ku^* - f)}_k \leq \alpha_k & \text{for} \ u_k^* = 0 \\
    \abs{K^*j_r(Ku^* - f)}_k  = \alpha_k & \text{for} \ u_k^* \neq 0 \ .
  \end{cases}
  \]
  Let $u^*$ be a minimizer and
  denote by $J = \set{k \in \NN}{\abs{K^*j_r(Ku^* - f)}_k = \alpha_k}$
  which has to be a finite set since otherwise $K^*j_r(Ku^* - f) 
  \notin \lpspace{r'}$ (remember the assumption $\alpha_k \geq 
  \underline{\alpha} > 0$). 
  Likewise, there exists a $\rho < 1$ such that 
  $\abs{K^*j_r(Ku^* - f)}_k \leq \alpha_k \rho$ for all $k \notin I$.
  For convenience, denote by $P$ the continuous
  projection $(Pu)_k = u_k(\chi_{\NN \without J})_k$, 
  by $Q = I - P$ and define the semi-norms 
  $\seminorm[1]{z} = \norm[1]{Pz}$ in the space $Z = \lpspace{1}$
  as well as $\seminorm[r]{z} = \norm[r]{Pz}$ in $\lpspace{r}$.

  We derive an estimate which somehow states the $q$-convexity
  of $\Phi$ with respect to the semi-norm 
  $\seminorm[r]{\placeholder}$.
  Observe that for $k \notin J$,
  there has to be $u_k^* = 0$, hence one can estimate, for all
  $\norm[r]{v} \leq M$,
  \begin{align*}
    R(v) &\geq \sum_{k \notin J} \alpha_k (\abs{v_k} - \abs{u^*}) + 
    F'(u^*)_k(v_k - u_k^*) \geq \sum_{k \notin J} \alpha_k 
    \bigl( \abs{v_k} - \abs{F'(u^*)_k} \abs{v_k} \bigr) \\
    &\geq (1-\rho) \sum_{k \notin J} \alpha_k  \abs{v_k} 
    \geq (1-\rho) \underline{\alpha} \seminorm[1]{v - u^*}  
    \ .
  \end{align*}
  Then one estimates
  $\seminorm[1]{v - u^*} \geq \seminorm[r]{v - u^*} \geq M^{1-q} 
  \seminorm[r]{v - u^*}^q$ which leads to
  \begin{equation}
    \label{eq:ex_sparsity_bregman_est}
    R(v) \geq c \seminorm[r]{v - u^*}^q 
  \end{equation}
  for a $c > 0$ which only depends on $M$. On the other hand, observe
  that a variant of Condition~\ref{cond:feasible_seminorm_linop} 
  is satisfied: $\lpspace{1} = Z = Z_0 \directsum Z_1$ with $Z_0
  = \range{Q}$
  finite-dimensional and hence closed in $\lpspace{r}$ and 
  $\seminorm[1]{\placeholder}$ being exactly the norm on 
  $Z_1 = \range{P}$.
  By the finite basis injectivity property, $K$ is injective on 
  $\range{Q}$ and since the latter is finite-dimensional, also 
  continuously invertible. But, \eqref{eq:ex_sparsity_bregman_est}
  and the latter is exactly what is utilized in the proof of 
  Theorem~\ref{thm:gen_tikhonov_convergence} to show the desired
  convergence rate. Hence, by repeating the arguments there, one
  obtains $\norm[r]{u^n - u^*} =  \kronO{n^{(1-p)/q}}$.
\end{example}

\begin{figure}
  \centering
  \begin{tikzpicture}[thick,xscale=4.5,yscale=0.06]
    \draw[gray] (-0.025,0) -- (1.025,0);
    \draw[->] (0,-11) -- (1.05,-11) node[right] {$s$};
    \draw[->] (0,-11) -- (0,28);
    \foreach \x / \l in {0/$0$, 0.2/$0.2$, 0.4/$0.4$,
      0.6/$0.6$, 0.8/$0.8$, 1/$1$}
    \draw (\x,-11-1) node[below] {\l} -- (\x,-11+1);
    \foreach \x / \l in {-8/$-8$, 0/$0$, 8/$8$, 16/$16$, 24/$24$}
    \draw (-0.01,\x) node[left] {\l} -- (0.01,\x);
    
    \draw[blue] plot file{progs/u_true.table} node[above] {$u$};
  \end{tikzpicture}
  \begin{tikzpicture}[thick,xscale=4.5,yscale=13.5]
    \draw[->] (0,0) -- (1.05,0) node[right] {$s$};
    \draw[->] (0,0) -- (0,0.15);
    \foreach \x / \l in {0/$0$, 0.2/$0.2$, 0.4/$0.4$,
      0.6/$0.6$, 0.8/$0.8$, 1/$1$}
    \draw (\x,-0.005) node[below] {\l} -- (\x,0.005);
    \foreach \x / \l in {0/$0$, 0.04/$0.04$, 0.08/$0.08$, 0.12/$0.12$}
    \draw (-0.01,\x) node[left] {\l} -- (0.01,\x);

    \draw[blue] plot file{progs/f_true.table} node[below] {$f$};
    \draw[red,only marks,mark=*,mark options={xscale=0.003,yscale=0.001}] plot file{progs/f.table} node[above] {$f_{\delta}$};
  \end{tikzpicture}
  
  \begin{tikzpicture}[thick,xscale=4.5,yscale=0.06]
    \draw[gray] (-0.025,0) -- (1.025,0);
    \draw[->] (0,-11) -- (1.05,-11) node[right] {$s$};
    \draw[->] (0,-11) -- (0,28);
    \foreach \x / \l in {0/$0$, 0.2/$0.2$, 0.4/$0.4$,
      0.6/$0.6$, 0.8/$0.8$, 1/$1$}
    \draw (\x,-11-1) node[below] {\l} -- (\x,-11+1);
    \foreach \x / \l in {-8/$-8$, 0/$0$, 8/$8$, 16/$16$, 24/$24$}
    \draw (-0.01,\x) node[left] {\l} -- (0.01,\x);
    
    \draw[red] plot file{progs/u_true.table} node[above] {$u$};
    \draw[blue] plot file{progs/u_p1.5.table} node[below] {$u_{1.5}$};
  \end{tikzpicture}
  \begin{tikzpicture}[thick,xscale=4.5,yscale=13.5]
    \draw[->] (0,0) -- (1.05,0) node[right] {$s$};
    \draw[->] (0,0) -- (0,0.15);
    \foreach \x / \l in {0/$0$, 0.2/$0.2$, 0.4/$0.4$,
      0.6/$0.6$, 0.8/$0.8$, 1/$1$}
    \draw (\x,-0.005) node[below] {\l} -- (\x,0.005);
    \foreach \x / \l in {0/$0$, 0.04/$0.04$, 0.08/$0.08$, 0.12/$0.12$}
    \draw (-0.01,\x) node[left] {\l} -- (0.01,\x);
    
    \draw[red] plot file{progs/f_true.table} node[above] {$f$};
    \draw[blue] plot file{progs/Ku_p1.5.table} node[below] {$Ku_{1.5}$};
  \end{tikzpicture}

  \begin{tikzpicture}[thick,xscale=4.5,yscale=0.06]
    \draw[gray] (-0.025,0) -- (1.025,0);
    \draw[->] (0,-11) -- (1.05,-11) node[right] {$s$};
    \draw[->] (0,-11) -- (0,28);
    \foreach \x / \l in {0/$0$, 0.2/$0.2$, 0.4/$0.4$,
      0.6/$0.6$, 0.8/$0.8$, 1/$1$}
    \draw (\x,-11-1) node[below] {\l} -- (\x,-11+1);
    \foreach \x / \l in {-8/$-8$, 0/$0$, 8/$8$, 16/$16$, 24/$24$}
    \draw (-0.01,\x) node[left] {\l} -- (0.01,\x);
    
    \draw[red] plot file{progs/u_true.table} node[above] {$u$};
    \draw[blue] plot file{progs/u_p2.table} node[below] {$u_2$};
  \end{tikzpicture}
  \begin{tikzpicture}[thick,xscale=4.5,yscale=13.5]
    \draw[->] (0,0) -- (1.05,0) node[right] {$s$};
    \draw[->] (0,0) -- (0,0.15);
    \foreach \x / \l in {0/$0$, 0.2/$0.2$, 0.4/$0.4$,
      0.6/$0.6$, 0.8/$0.8$, 1/$1$}
    \draw (\x,-0.005) node[below] {\l} -- (\x,0.005);
    \foreach \x / \l in {0/$0$, 0.04/$0.04$, 0.08/$0.08$, 0.12/$0.12$}
    \draw (-0.01,\x) node[left] {\l} -- (0.01,\x);
    
    \draw[red] plot file{progs/f_true.table} node[above] {$f$};
    \draw[blue] plot file{progs/Ku_p2.table} node[below] {$Ku_2$};
  \end{tikzpicture}

  \medskip
  \begin{tabular}{cccccc}
    \toprule
    $p$ & $\norm[p]{Ku_p - f_\delta}$ 
    & $\#\text{NZ}$ & $\norm[p]{f - f_\delta}$ & $\alpha$ \\
    \midrule
    $1.5$ & $0.004651$ & $40$ & $0.004565$ & $0.0025$ \\
    $2$ & $0.004669$ & $61$ & $0.005069$ & $0.000022$ \\
    \bottomrule
  \end{tabular}
  \caption{The outcome of the algorithm for the solution of 
    \eqref{eq:ex_numerical_comp}. 
    Top row: 
    The exact solution (left) and the exact data $f$ which is disturbed
    by noise $f_\delta$ (right). 
    Middle rows: The results of the 
    thresholding-like algorithm $u_{1.5}$ and $u_2$ (left) as well as 
    their images under $K$ (right) 
    for $p=1.5$ and $p=2$, respectively.
    Bottom row: The discrepancy (in the $p$-norm), the number of
    non-zero elements in the solution (out of $500$, $u$ has $9$),
    the data error as well as the regularization parameter 
    associated with $u_{1.5}$ and $u_2$, respectively.
  }
  \label{fig:ex_numerics}
\end{figure}

\begin{example}
  \label{ex:integration_sparsity}
  We like to present numerical computations for a variant of 
  the algorithm developed
  in Example~\ref{ex:sparsity}. The problem we consider is 
  inverting the integration operator on $[0,1]$ in 
  $\LPspace{p}{[0,1]} \rightarrow \LPspace{p}{[0,1]}$
  which is, for simplicity, discretized (with a delta-peak basis) 
  and penalized with the discrete
  $\LPspace{1}{[0,1]}$-norm:
  \begin{equation}
    \label{eq:ex_numerical_comp}
    \min_{u \in \LPspace{p}{[0,1]}} \ \frac{\norm[p]{Ku-f}^p}{p} + 
    \alpha \norm[1]{u} \quad, \quad Ku(t) = \int_0^t u(s) \dd{s} \ .
  \end{equation}
  The forward-backward splitting 
  algorithm applied to the discretized problem then is just 
  the iterative thresholding-like procedure computed in 
  Example~\ref{ex:sparsity} restricted to finitely many dimensions.
  Computations for $p=1.5$ and $p=2$ for noisy data have been performed,
  see Figure \ref{fig:ex_numerics}.
  The regularization parameter has been tuned in order to yield 
  approximately the same discrepancy in the respective norms. 
  As one can see, choosing $p$ 
  less than $2$ may favor more sparsity: Compared to $p=1.5$, the solution
  for $p=2$ has approximately $50\%$ more non-zero elements.
 
  Furthermore, as predicted by the theory, 
  the numerical algorithm indeed converged
  with some rate, in practice, however, it turns out that the 
  convergence is somewhat stable on the one hand but
  very slow on the other hand 
  and many iterations are needed to achieve accurate 
  results.
\end{example}

The following example focuses on presenting an application in which
it is natural to consider the Banach-space setting and on showing
that the forward-backward splitting procedure leads to a convergent
algorithm.

\begin{example}
  \label{ex:tv_3d_deconv}
  Consider the problem of restoring an image in higher dimensions
  with a total-variation penalty term:
  \begin{equation}
    \label{eq:ex_tikhonov_tv_penalty}
    \min_{u \in \LPspace{p}{\Omega}} \ 
    \frac{\norm[p]{Ku - f}^p}{p} + 
    \alpha \TV(u)
  \end{equation}
  with $\Omega \subset \RR^d$ being a bounded domain such that
  $\BV(\Omega)$ is compactly embedded in appropriate Lebesgue spaces.
  Here, $K$ denotes a linear and continuous operator mapping 
  $\LPspace{p}{\Omega} \rightarrow \LPspace{p}{\Omega}$
  with $1 < p \leq d/(d-1)$. This covers in particular the convolution with
  kernels which are only Radon measures, i.e.~$Ku = u \conv k$ with $k \in 
  \mathcal{M}(\RR^d)$, such that 
  the usually assumed continuity 
  $\LPspace{p}{\Omega} \rightarrow \LPspace{2}{\Omega}$
  does not necessarily hold \cite{vese2001bvdenoising}, 
  especially for $d \geq 3$.
  Moreover, in general, coercivity in $\LPspace{2}{\Omega}$ fails and
  it is necessary to consider the Banach space setting. 
  From \cite{acar1994anaboundvariation} we know 
  that~\eqref{eq:ex_tikhonov_tv_penalty} indeed admits a solution in
  $\LPspace{p}{\Omega}$ under general conditions.

  In the following, we focus on the problem of restoring a
  blurred three-dimensional image, i.e.~$Ku = u \conv k$ for some
  non-negative point-spread function $k$ with $\norm[1]{k} = 1$ 
  and $d = 3$. Such a task arises, for example, in confocal microscopy 
  \cite{kempen1997comparisonconfocalmicroscopy}.
  In order to apply the algorithm, we have to solve auxiliary problems of
  the type
  \begin{equation}
    \label{eq:tv_3d_deconv_aux}
    \min_{v\in \LPspace{p}{\Omega}} \frac{\norm[p]{v - u}^p}{p} + s 
    \bigl( \scp{w}{v}  + \alpha \TV(v) \bigr)
  \end{equation}
  with $u \in \LPspace{p}{\Omega}$ and $w \in \LPspace{p'}{\Omega}$ which
  resembles the total-variation denoising functional.
  Following \cite{hintermueller2004bvopti,chambolle2004tvprojection}, 
  one approach to 
  solve~\eqref{eq:tv_3d_deconv_aux} is to consider 
  the Fenchel-predual problem which equivalently reads as
  \[
  \min_{\substack{\norm[\infty]{z} \leq s\alpha \\ z \inprod \nu = 0 \ 
      \text{on} \ \bdry \Omega}} \ 
  \frac{\norm[p']{\divergence z - sw}^{p'}}{p'} + \scp{\divergence z}{u} \ .
  \]
  In the discrete setting, with an appropriate
  linear discrete divergence operator, the above becomes a smooth
  minimization problem with convex constraints which can, for example,
  be solved approximately 
  by a gradient projection method \cite{dunn1981gradientprojection} 
  or a semi-smooth Newton method
  \cite{ng2007semismoothnewtontv}.
  Finally, once a solution $z^*$ of the predual problem is known, a solution
  of~\eqref{eq:tv_3d_deconv_aux} can be obtained by the corresponding
  Fenchel optimality conditions which, in this case, lead to the identity
  $v^* = u + j_{p'}(\divergence z^* - sw)$.

  Hence, one can actually perform the forward-backward splitting procedure
  in practice. Regarding its convergence, we can are only able to apply 
  Proposition~\ref{prop:weak_convergence} and get weak convergence
  of $\seq{u^n}$ to 
  the minimizer $u^*$ in $\LPspace{p}{\Omega}$ (note that the minimizer
  has to be unique since $K$ is injective). But additionally, one easily 
  deduces that $\seq{\BV(u^n)}$ is also bounded which gives, 
  by compactness, the strong convergence in case $1 < p < 3/2$.
  Consequently, considering the inverse problem in Banach space
  yields a convergent algorithm for regularization with the 
  total-variation semi-norm. The convergence, however, comes without
  an estimate for its speed.
  
  Based on the arguments presented above, 
  numerical computations have been carried out. You can
  see the outcome of the algorithm for some sample data in 
  the Figures~\ref{fig:tv_3d_deconv_art} and~\ref{fig:tv_3d_deconv_microscopy}.
\end{example}

\begin{figure}
  \centering
  \begin{tabular}{ccc} \\
    \includegraphics[height=3.3cm]{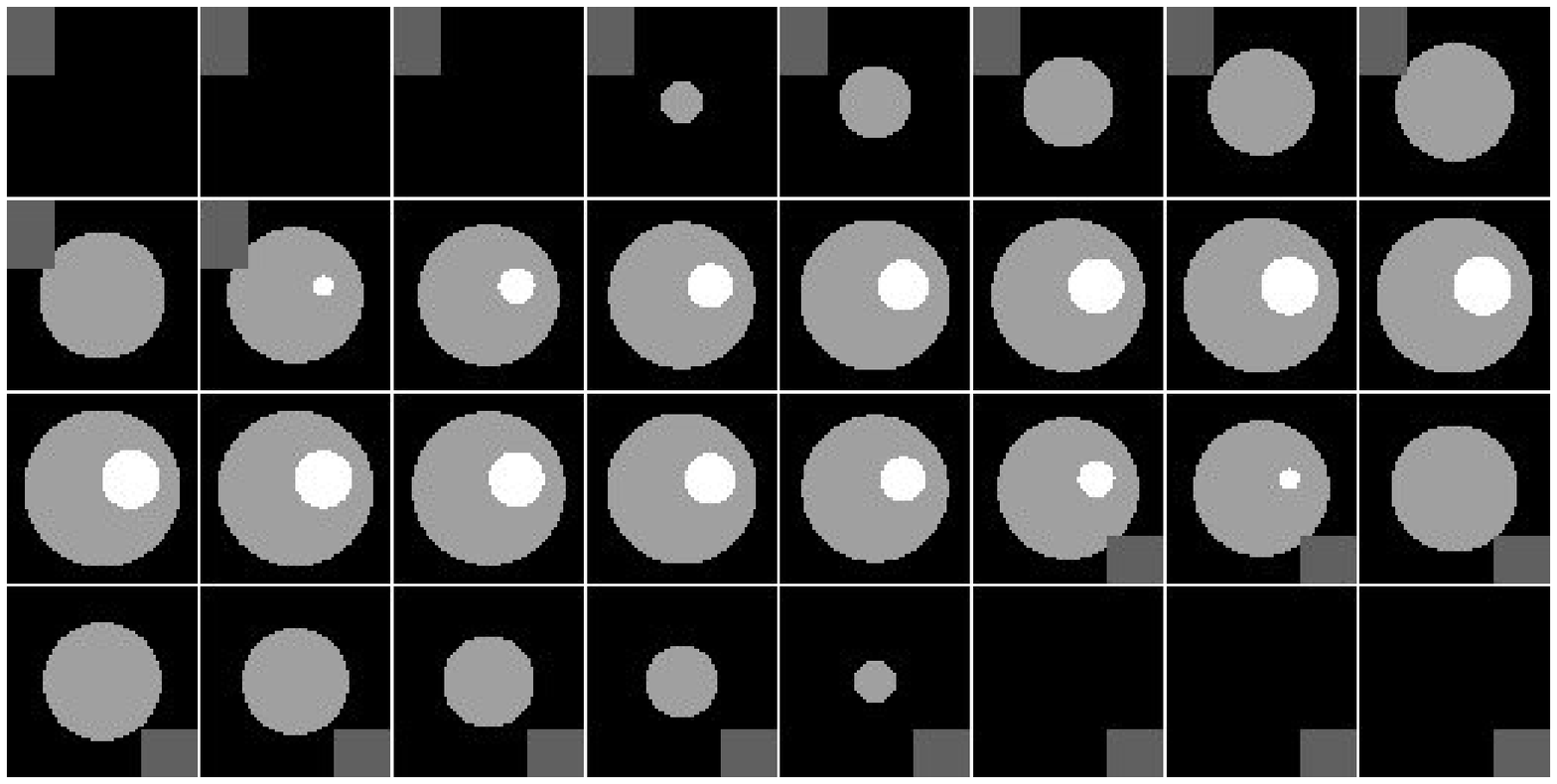}& 
    \includegraphics[trim=3cm 3cm 4cm 6cm,clip,height=3.3cm]{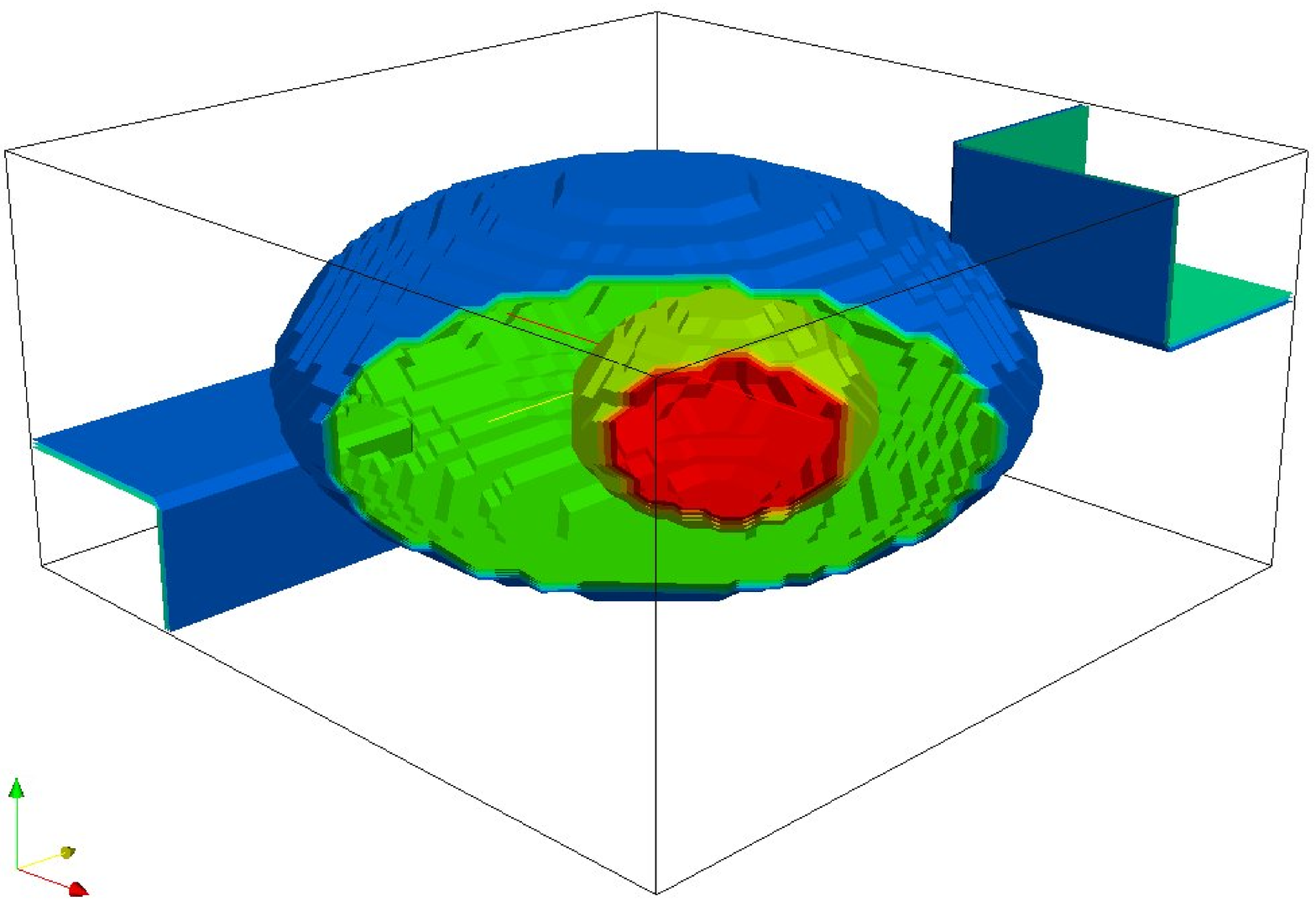} & $u$ \\[3pt]
    \includegraphics[height=3.3cm]{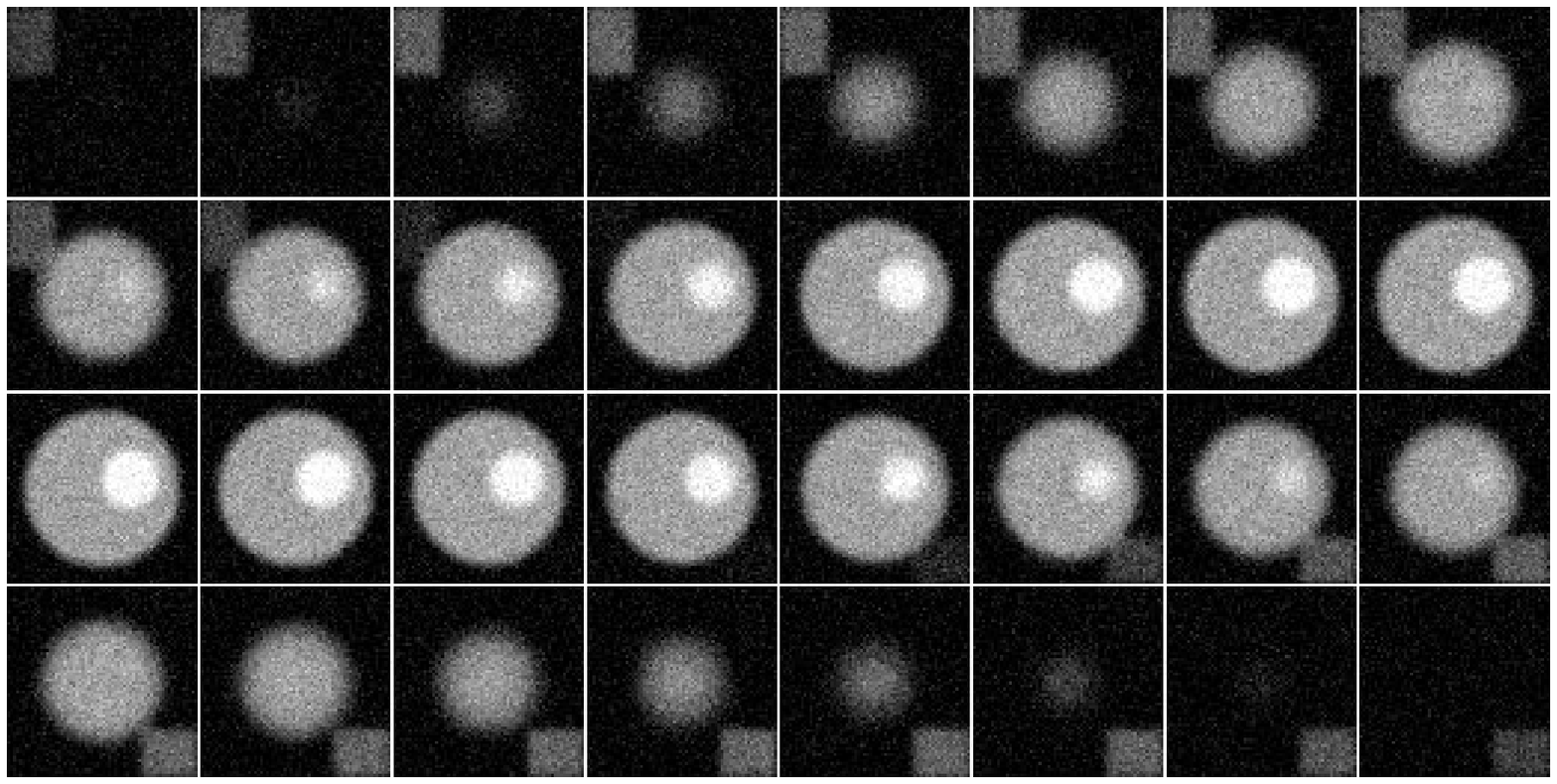}& 
    \includegraphics[trim=3cm 3cm 4cm 6cm,clip,height=3.3cm]{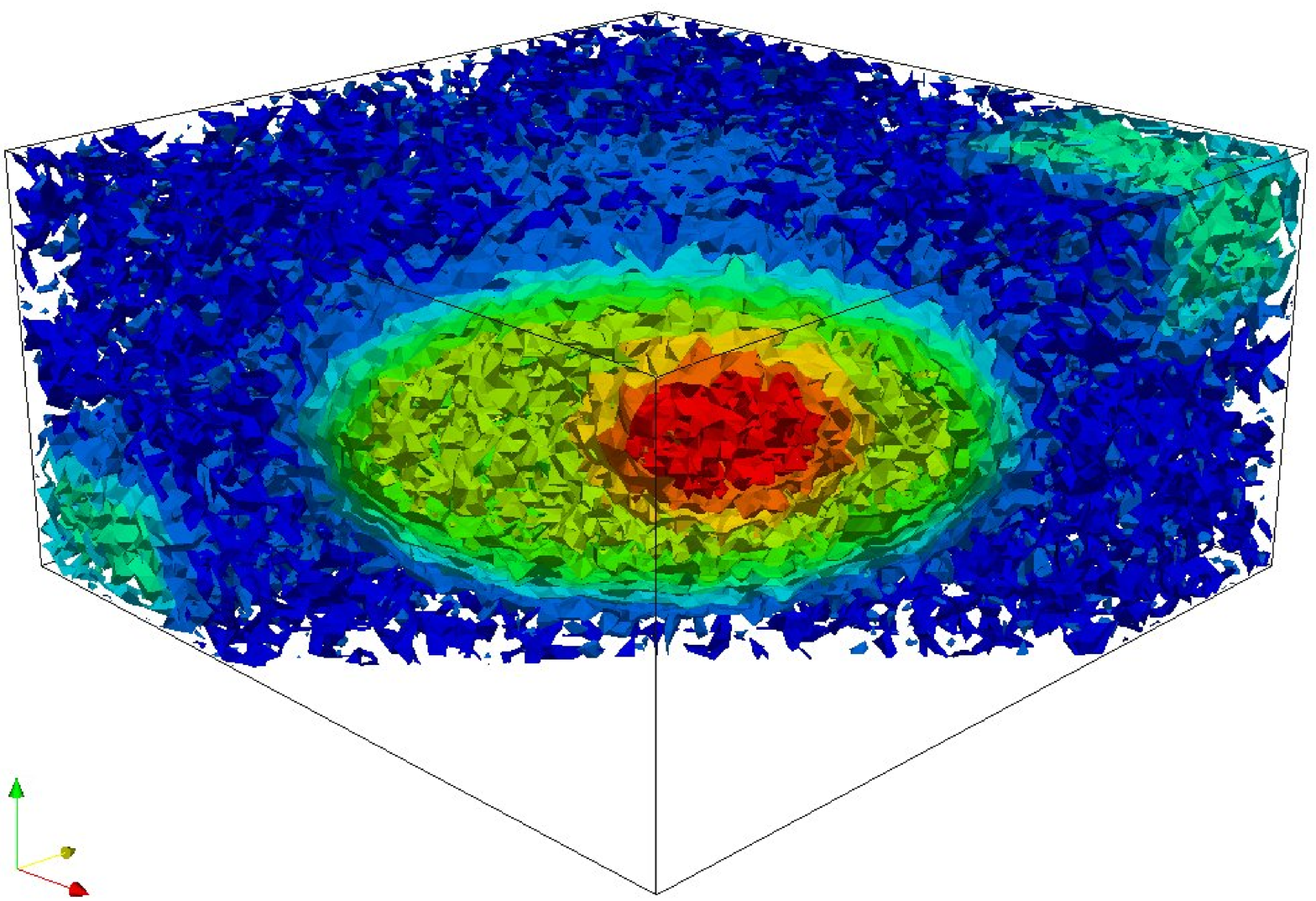} & $f$ \\[3pt]
    \includegraphics[height=3.3cm]{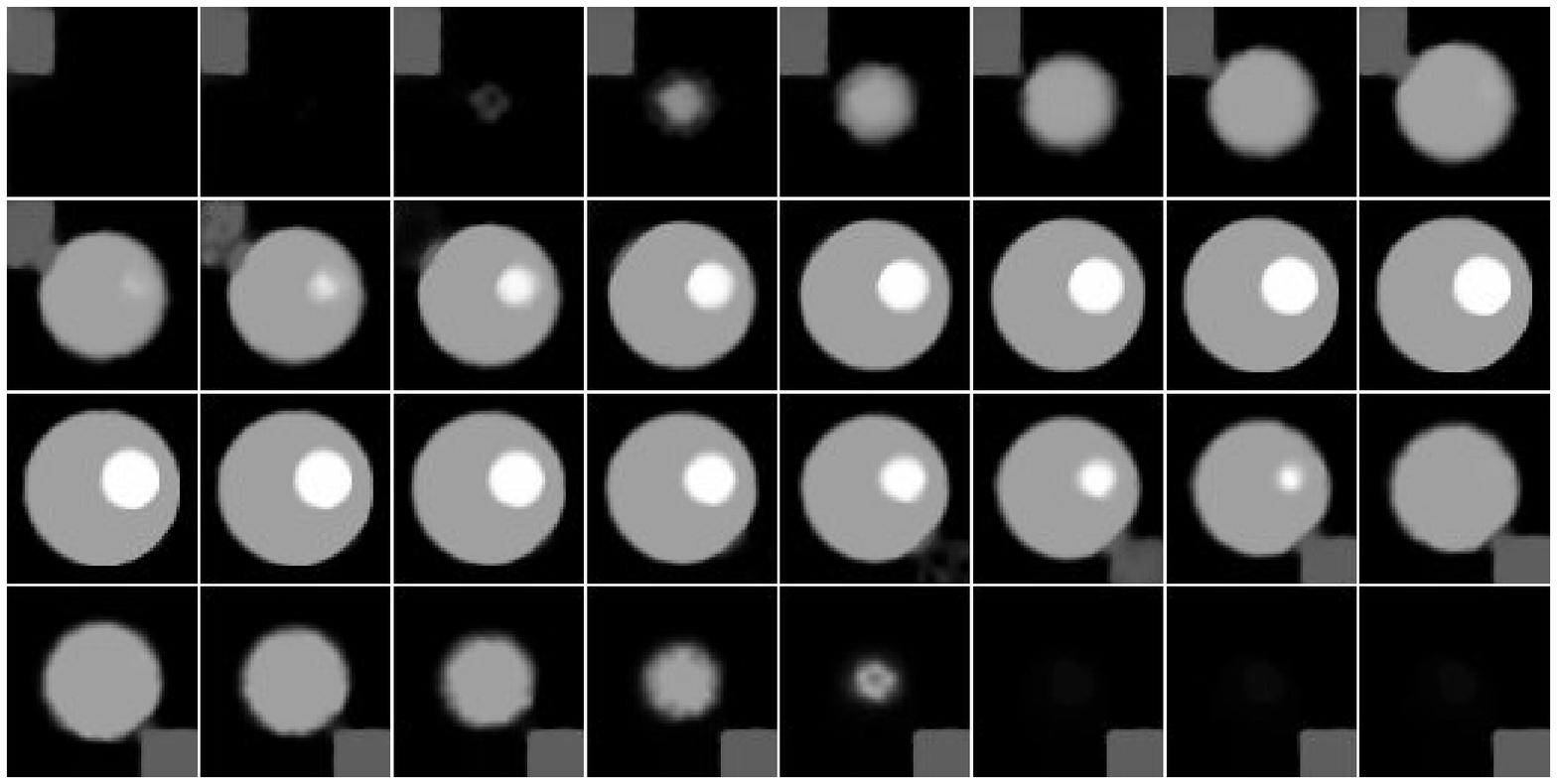}& 
    \includegraphics[trim=3cm 3cm 4cm 6cm,clip,height=3.3cm]{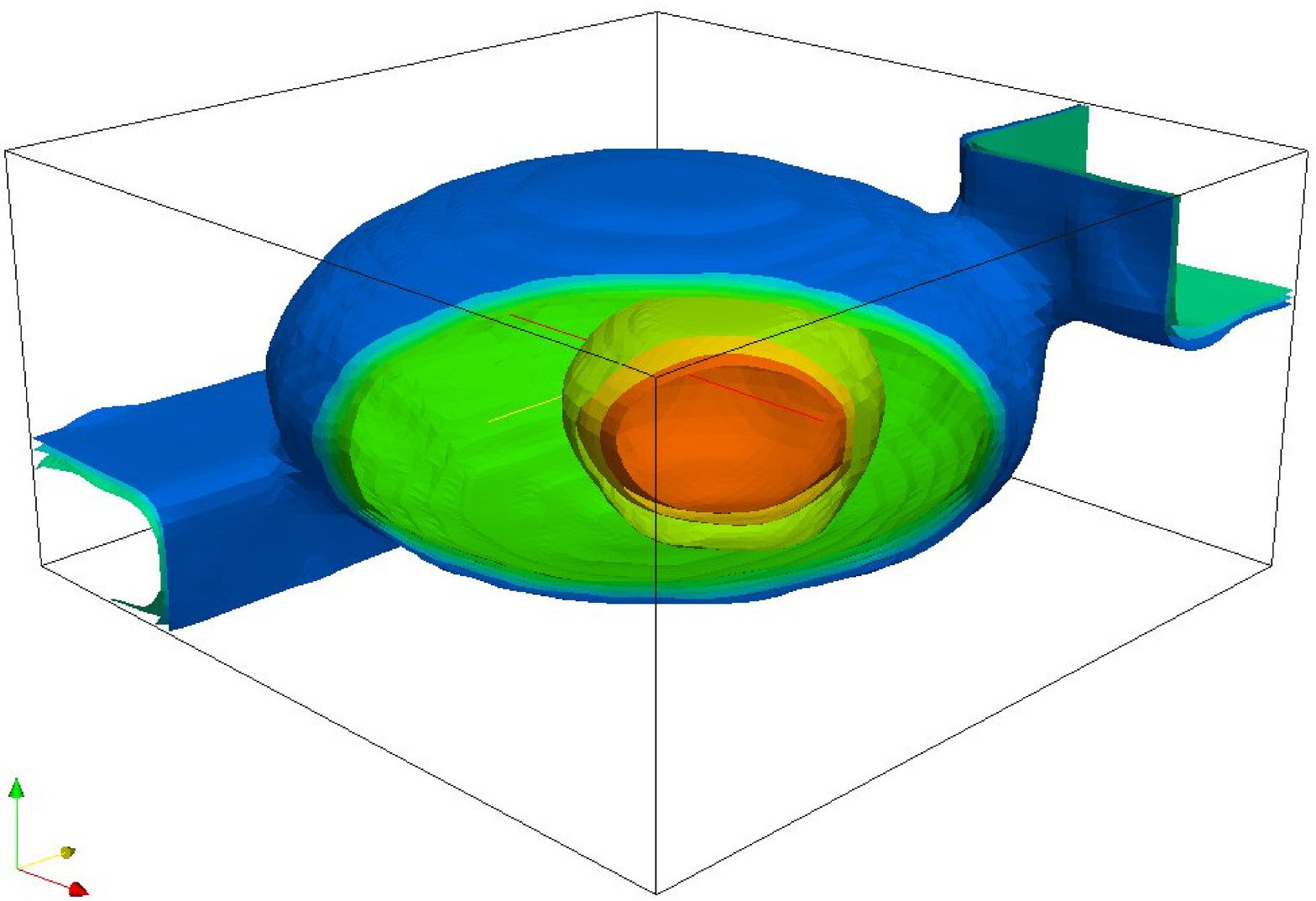} & $u^*$
  \end{tabular}
  \caption{Numerical illustration of three-dimensional total-variation 
    regularization for image deblurring. On the left hand side, you can
    see, respectively, the 2D-slices of the 3D-dataset, while on the
    right hand side, some isosurfaces of a cut of the data is depicted.
    The rows show, from top to bottom,
    the original artificially created data $u$, which
    has been blurred and disturbed with noise to form the data $f$ and
    the outcome of the
    iterative forward-backward splitting algorithm $u^*$ for the deblurring
    problem with total-variation penalization.}
  \label{fig:tv_3d_deconv_art}
\end{figure}

\begin{figure}
  \centering
  \begin{tabular}{ccc} \\
    \includegraphics[height=3.55cm]{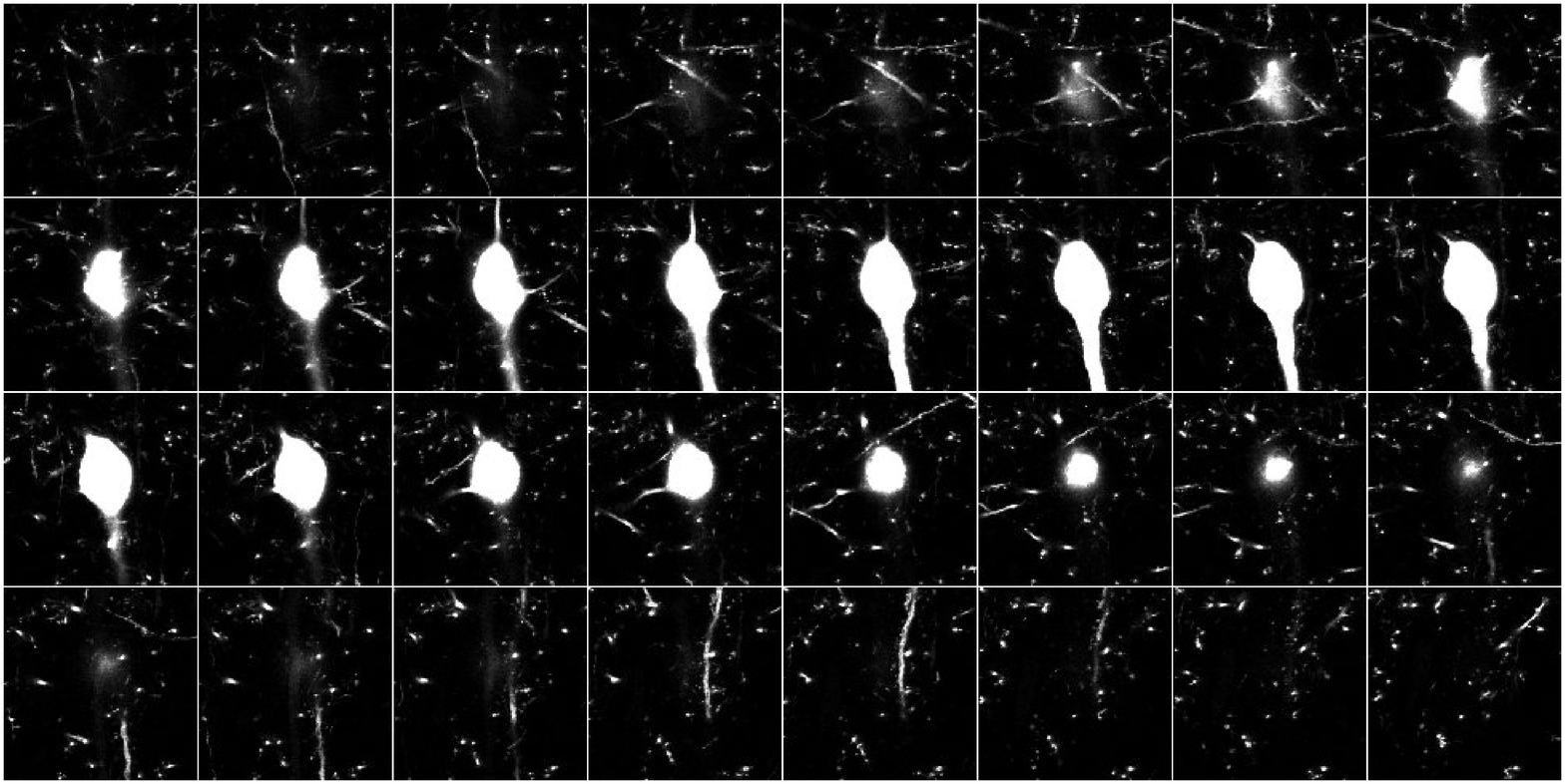}& 
    \includegraphics[trim=3cm 0cm 4.5cm 4cm,clip,height=3.55cm]{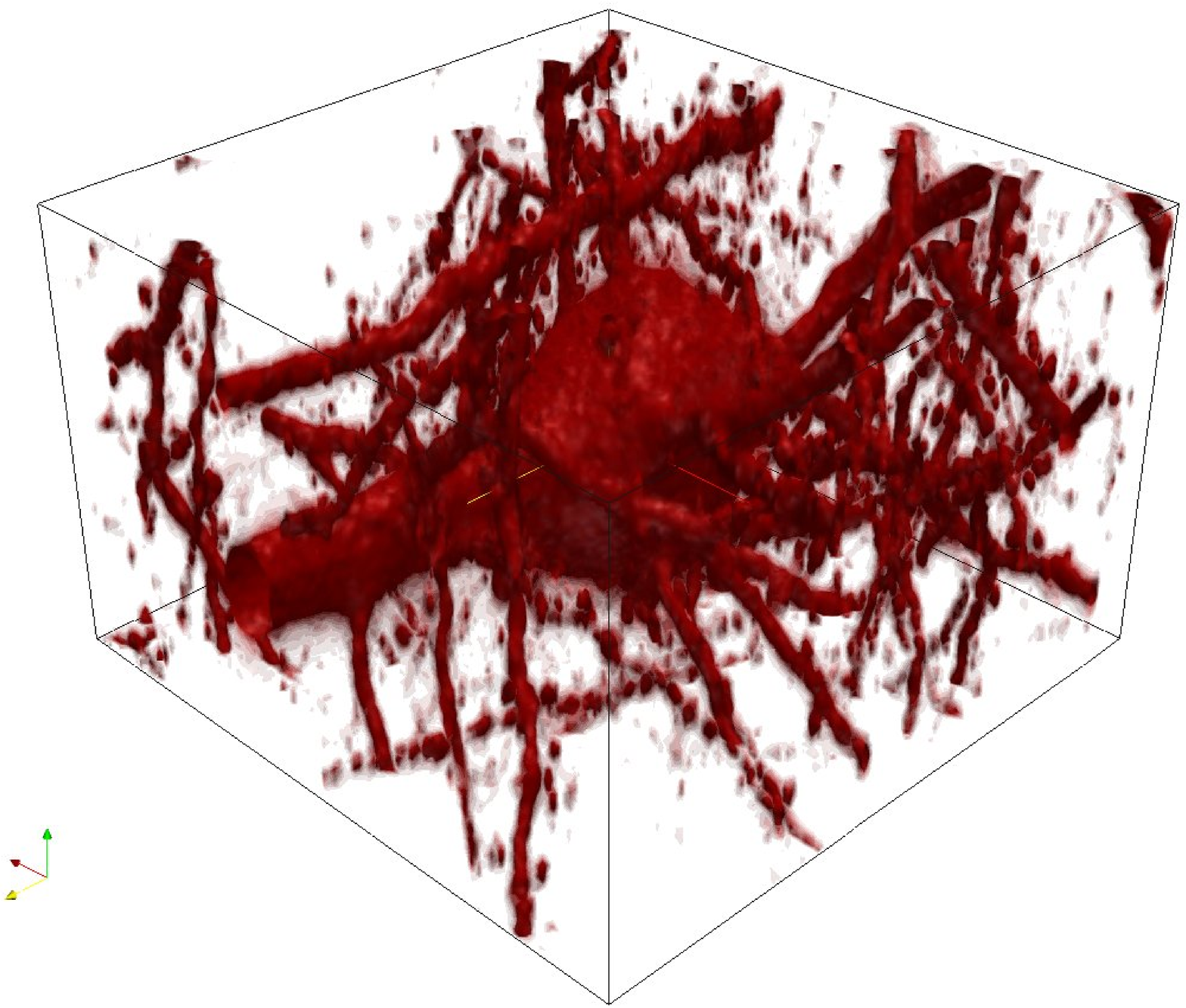} & $u$ \\[3pt]
    \includegraphics[height=3.55cm]{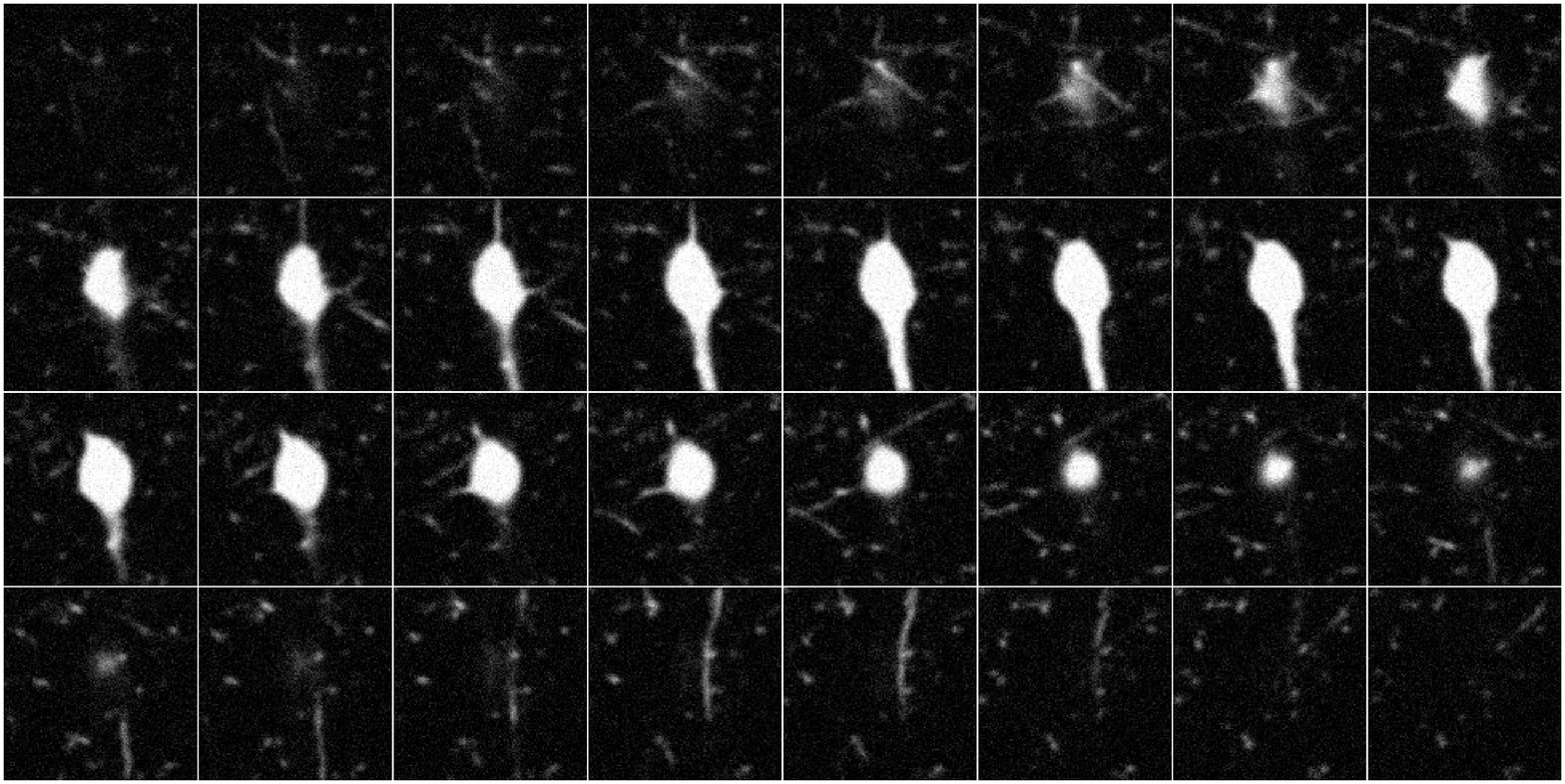}& 
    \includegraphics[trim=3cm 0cm 4.5cm 4cm,clip,height=3.55cm]{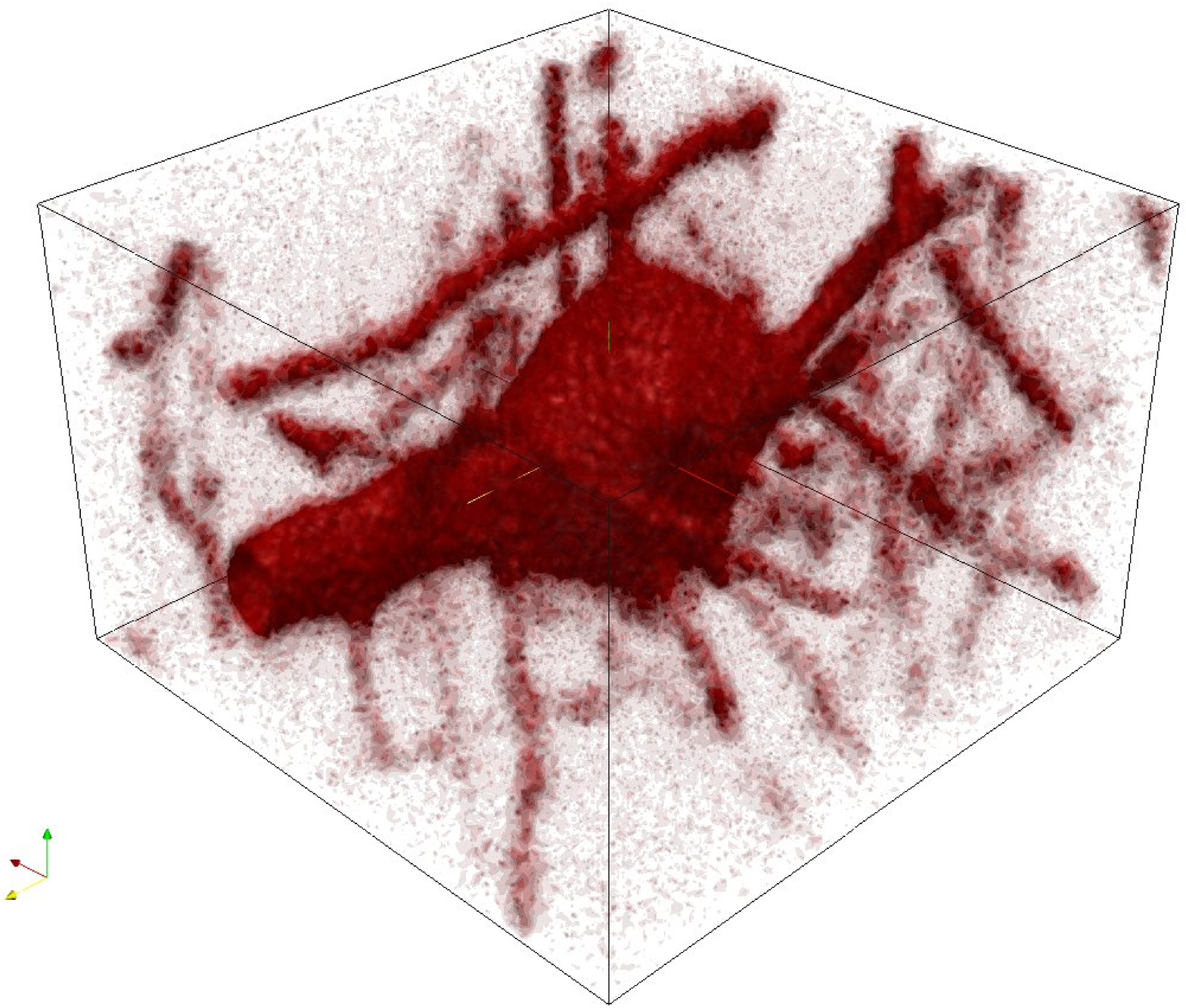} & $f$ \\[3pt]
    \includegraphics[height=3.55cm]{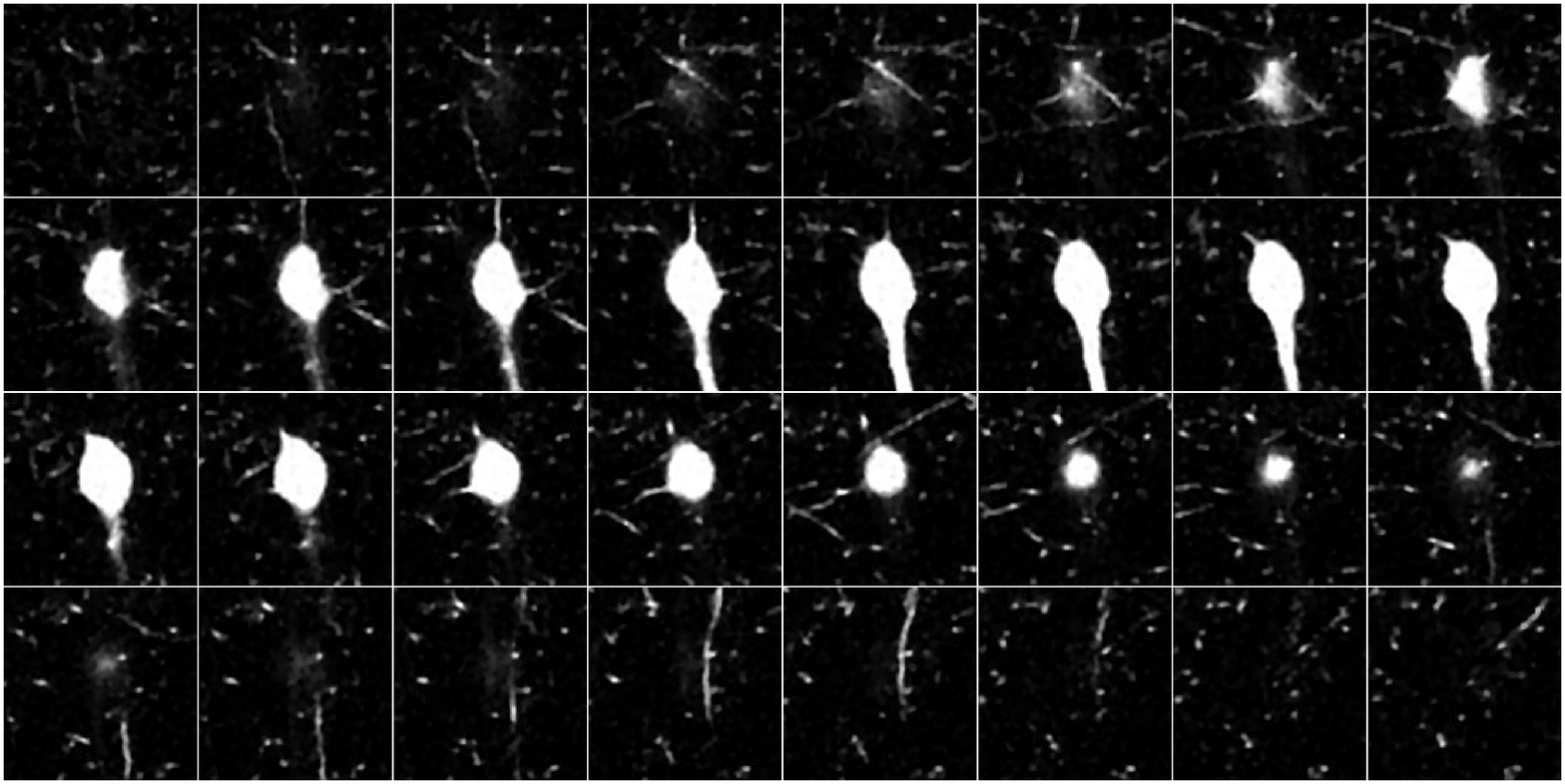}& 
    \includegraphics[trim=3cm 0cm 4.5cm 4cm,clip,height=3.55cm]{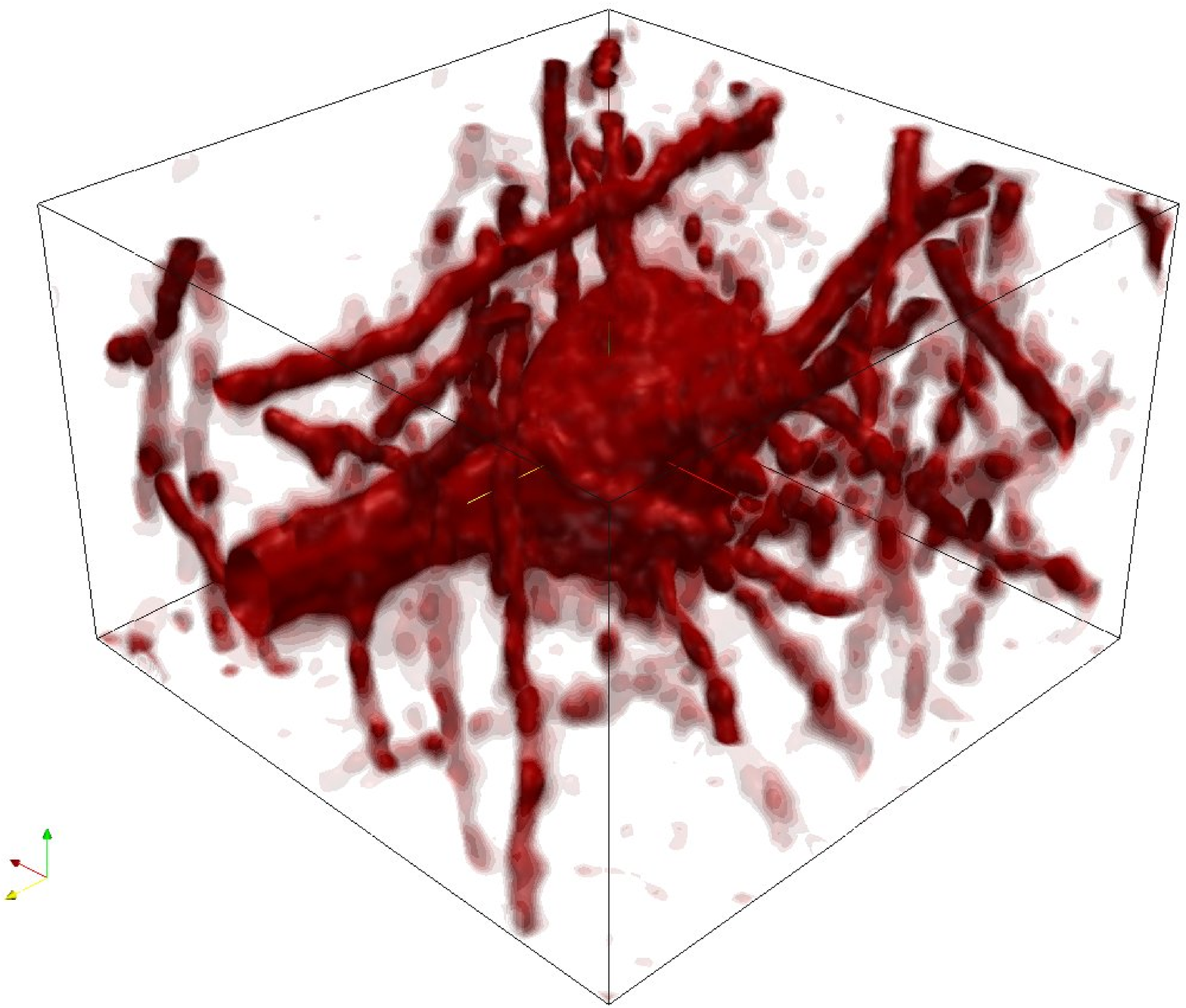} & $u^*$
  \end{tabular}
  \caption{Reconstruction of noisy blurred three-dimensional microscopy data
    showing cortical neurons in transgenic mice.
    Again, slices and isosurface representations of 
    the true image $u$, the noisy data $f$ and the solution of the 
    $\TV$-regularization problem $u^*$ (from top to bottom) are depicted.
    Here, the minimization algorithm is also able to remove the 
    noise artifacts
    from the data. However, a reduction of the contrast and some loss of
    detail can be observed
    as it is typical for total-variation based regularization.
    (Dataset from \texttt{http://152.19.37.82/pages/datasample.php}, 
    see also \cite{feng2000neuronsmice}).}
  \label{fig:tv_3d_deconv_microscopy}
\end{figure}

\section{Summary and Conclusions}
\label{sec:conclusions}

The aim of this paper was to show that there is a meaningful 
generalization of the
forward-backward splitting algorithm to general Banach spaces. The
main idea was to write the forward-backward step as a minimization
problem~\eqref{eq:forward_backward_splitting_hilbert} and to 
generalize this problem to~\eqref{eq:min_prob_aux} which defines
the iteration.
Convergence of this procedure was achieved by proving a descent
rate of $n^{1-p}$ of the functional distance on the hand and utilizing
notions of convexity of the non-smooth functional $\Phi$ to establish
norm-convergence of rate $n^{(1-p)/q}$. This rate is, however, 
rather slow in comparison to, e.g.~linear convergence. But, we
have convergence nevertheless, and 
in order to prove that the general procedure converges, it suffices 
to look at the functional for which the backward-step is performed.

These abstract results were applied to the concrete setting of Tikhonov 
functionals in Banach space. The forward-backward splitting 
algorithm was applied to the computational minimization of  
functionals of Tikhonov-type with semi-norm regularization and, 
using Bregman-Taylor-distance 
estimates, convergence was proven provided the linear operator
has a continuous inverse on the space where the semi-norm vanishes.
In particular, convergence rates translated to the
convexity of the regularizing semi-norm as well as to the
smoothness and convexity of the underlying data space, the latter
originating from the situation that semi-norms are in general invariant
on whole subspaces. 

As the examples showed, the results are applicable for 
Tikhonov functionals considered in practice.
In particular, the algorithm can be used to deal with sparsity
constraints in Banach space and to derive a convergent
generalization of the popular iterative soft-thresholding procedure
by Daubechies, Defrise and De Mol to Banach spaces. The resulting 
algorithm shares many properties: It is easy to implement, 
produces sparse iterates, but also converges very slowly, what was to 
expect since its prototype in Hilbert space also admits very slow 
convergence. Thus, there is the need to accelerate the procedure by, 
for example, utilizing better step-size rules or higher-order methods.
Finally, the method also works for image restoration problems of 
dimension three which can, in general, not be solved in 
$\LPspace{2}{\Omega}$ anymore.  Although the theory does not yield
estimates  for the convergence rate, we are still able to obtain strong 
convergence in some $\LPspace{p}{\Omega}$.

\bibliography{literature,paper}
\bibliographystyle{abbrv}

\end{document}